% !Mode:: "TeX:UTF-8"
\documentclass[12pt]{article}
\usepackage{amsmath,mathdots}
\usepackage{amssymb}
\usepackage{latexsym}
\usepackage{amsthm}
\usepackage{amsmath,amssymb,amsfonts,amsthm}
\usepackage{mdframed}
\usepackage{graphicx}
\usepackage{stmaryrd}
\usepackage{gensymb}

\usepackage{stmaryrd}
\usepackage{multirow}

% For tikz pictures
\usepackage{graphicx}
\usepackage{tikz}
\usetikzlibrary{matrix,arrows}
\usetikzlibrary{positioning}
\usetikzlibrary{fit}
\usetikzlibrary{patterns}

\usepackage[colorlinks,
linkcolor=blue,
anchorcolor=blue,
citecolor=blue
]{hyperref}

\newtheorem{thm}{Theorem}[section]

\newtheorem{lem}[thm]{Lemma}

% For tikz pictures
\usepackage{graphicx}
\usepackage{tikz}
\usetikzlibrary{matrix,arrows}
\usetikzlibrary{positioning}
\usetikzlibrary{fit}
\usetikzlibrary{patterns}

% defined commands
\newcommand{\dbrac}[1]{{\llbracket #1 \rrbracket}} 	% double brackets
                         % Separator
\newcommand{\boks}[2]{({#1, #2})}   % A box with the old
% notation

\newcommand{\pattern}[4]{										
	\raisebox{0.6ex}{
		\begin{tikzpicture}[scale=0.35, baseline=(current bounding box.center), #1]
		\foreach \x/\y in {#4}		\fill[gray!20] (\x,\y) rectangle +(1,1);
		\draw (0.01,0.01) grid (#2+0.99,#2+0.99);
		\foreach \x/\y in {#3}		\filldraw (\x,\y) circle (6pt);
		\end{tikzpicture}}
}

\DeclareMathOperator{\red}{red}

\begin{document}

\begin{center}
{\large \bf  Distributions of mesh patterns of short lengths on king permutations}
\end{center}

\begin{center}
Dan Li$^{a}$ and Philip B. Zhang$^{b*}$ 
\\[6pt]

$^{a,b}$School of Mathematical Sciences  \& Institute of Mathematics and Interdisciplinary Sciences\\
Tianjin Normal University, Tianjin, China\\[6pt]

Email:  
          $^{a}${\tt lidan\_ld@yeah.net} \ 
           $^{b*}${\tt zhang@tjnu.edu.cn}
\end{center}

\noindent\textbf{Abstract.}
Br\"{a}nd\'{e}n and Claesson introduced the concept of mesh patterns in 2011, and since then, these patterns have attracted significant attention in the literature. Subsequently, in 2015, Hilmarsson \emph{et al.} initiated the first systematic study of avoidance of mesh patterns, while Kitaev and Zhang conducted the first systematic study of the distribution of mesh patterns in 2019.
A permutation $\sigma = \sigma_1 \sigma_2 \cdots \sigma_n$ in the symmetric group $S_n$ is called a king permutation if $\left| \sigma_{i+1}-\sigma_i \right| > 1$ for each $1 \leq i \leq n-1$. Riordan derived a recurrence relation for the number of such permutations in 1965. The generating function for king permutations was obtained by Flajolet and Sedgewick in 2009.  In this paper, we initiate a systematic study of the distribution of mesh patterns on king permutations by finding distributions for 22 mesh patterns of short length. \\

\noindent {\bf Keywords:}  mesh pattern, king permutation, distribution, avoidance \\

\noindent {\bf AMS Subject Classifications:} 05A05, 05A15.\\

\section{Introduction}\label{intro}

The field of permutation patterns has garnered significant interest in the literature (see~\cite{Kit} and references therein). This area of research continues to increase gradually. 
The notion of a {\em mesh pattern}, which generalizes several classes of patterns, was introduced by Br\"and\'en and Claesson \cite{BrCl} to provide explicit expansions for certain permutation statistics as, possibly infinite, linear combinations of (classical) permutation patterns. 

A tuple $(\tau,R)$, where $\tau$ signifies a permutation comprising $k$ elements and $R$ is a subset of $\dbrac{0,k} \times \dbrac{0,k}$ (with
$\dbrac{0,k}$ representing the integer set $\{ 0, 1 \ldots  k \}$), constitutes a
\emph{mesh pattern} of length $k$.
We define $\boks{i}{j}$ as the rectangular area whose vertices are positioned at $(i,j), (i,j+1),
(i+1,j+1)$, and $(i+1,j)$. In this context, horizontal lines symbolize the values,  and vertical lines denote the positions in the pattern. Mesh patterns can visually be represented by shading the boxes in $R$. For instance, the mesh pattern with $\tau=123$ and $R = \{\boks{1}{1},\boks{2}{2}\}$ is illustrated as follows.
\[
\pattern{scale=0.8}{3}{1/1,2/2,3/3}{1/1,2/2}
\]

Numerous papers have been dedicated to the study of mesh patterns and their generalizations; see, for example, \cite{AKV,BG,Borie,HZ,JKR,KL,KR1,T1,T2,TR}. Nevertheless, the first systematic study of mesh patterns was not conducted until \cite{Hilmarsson2015Wilf}, where 25 out of 65 non-equivalent cases of patterns of length 2 were solved in the context of {\em avoidance}. In \cite{KZ}, a systematic study of {\em distributions} of mesh patterns was initiated by providing 22 distribution results  for the patterns considered in \cite{Hilmarsson2015Wilf}, which includes 14 distributions where avoidance was previously unknown. Kitaev, Zhang, and Zhang~\cite{KZZ} present far-reaching generalizations of 8 known distribution results and 5 known avoidance results concerning mesh patterns. They achieve this by providing distribution or avoidance formulas for certain infinite families of mesh patterns in terms of formulas for smaller patterns. Furthermore, as a corollary to a general result, in \cite{KZZ}  they determine the distribution of one additional mesh pattern of length 2.

In this paper, we extend the distribution results on mesh patterns by Kitaev and Zhang \cite{KZ} to the cases of king permutations that have been explored in~\cite{BESM-0,BESM,Claesson,FS,Riordan} (see Section~\ref{prelim} for definitions). We initiate a systematic study of the distribution of mesh patterns on king permutations and derive the distributions for 22 specific patterns.
Among the short mesh patterns considered by Kitaev and Zhang \cite{KZ} for all permutations, we derive distributions for 22 patterns on king permutations. 
Table~\ref{tab-1} provides an overview of our enumerative results. We note that, unlike the case of all permutations  \cite{KZ}, all distributions we obtain are different except for the patterns  $\pattern{scale = 0.5}{2}{1/1,2/2}{0/0,0/1,0/2,1/0,1/1,1/2,2/0,2/1,2/2 }$, $\pattern{scale = 0.5}{2}{1/1,2/2}{0/1,1/1,1/2,1/0,1/2,2/1}$, $\pattern{scale=0.5}{2}{1/1,2/2}{0/1,1/2,2/0,1/0,1/1,2/1,0/2}$, $\pattern{scale=0.5}{2}{1/1,2/2}{0/1,1/2,0/0,2/2,1/0,1/1,2/1}$, $\pattern{scale=0.5}{2}{1/1,2/2}{0/1,1/2,0/0,1/0,1/1,2/1}$, and $\pattern{scale=0.5}{2}{1/1,2/2}{0/1,1/2,1/0,1/1,2/1,0/2}$ (discussed in Theorem~\ref{triv-thm-11}) that cannot occur in king permutations.

The paper is organized as follows. In Section~\ref{prelim}, we provide necessary definitions and known results, and derive several distributions to be used throughout the paper. In Section~\ref{trivial-sec}, we consider mesh patterns that either can occur at most once or whose occurrences can be easily found. Other distributions are derived in Section~\ref{other-distr-sec}. Finally, in Section~\ref{final-sec}, we provide concluding remarks and list the short mesh patterns from \cite{KZ,KZZ} for which we were unable to find distributions.

  \begin{table}[!ht]
 	{
 		\renewcommand{\arraystretch}{1.3}
\begin{footnotesize}
 \begin{center} 
 		\begin{tabular}{|c|c|c||c|c|c|}
 			\hline
 			{\footnotesize Nr.\ } & {\footnotesize Repr.\ $p$}  & {\footnotesize Distribution}  &  {\footnotesize Nr.\ } & {\footnotesize Repr.\ $p$}  & {\footnotesize Distribution}  
 			\\[5pt]
 			\hline		\hline
 			 $X$ &  \pattern{scale=0.5}{1}{1/1}{0/1,1/0} & Theorem~\ref{thm-length-1}
			%{Non-inversions}
 & 
  		22 & $\pattern{scale = 0.5}{2}{1/1,2/2}{0/1,1/2,0/0,2/0,2/2,1/1}$ & Theorem~\ref{triv-thm-22}
\\[5pt]
 			\hline
 		$X'$	 & \pattern{scale=0.5}{1}{1/1}{0/0,1/1} & Theorem~\ref{thm-length-1}
 			&
		27 & $\pattern{scale=0.5}{2}{1/1,2/2}{0/1,2/0,2/2,1/0,1/1,0/2}$    & Theorem~\ref{thm-pat-27}
 			\\[5pt] 
 			\hline
 		     10 &  $\pattern{scale = 0.5}{2}{1/1,2/2}{0/0,0/1,0/2,2/0,2/1,2/2}$ &   Theorem~\ref{triv-thm-10}  & 
 			 	28 & $\pattern{scale=0.5}{2}{1/1,2/2}{0/1,1/2,0/0,2/2,1/0,2/1}$    &  Theorem~\ref{thm-pat-28} 
 		    \\[5pt] 
 			\hline
 			11 & $\pattern{scale = 0.5}{2}{1/1,2/2}{0/0,0/1,0/2,1/0,1/1,1/2,2/0,2/1,2/2 }$ & Theorem~\ref{triv-thm-11}
 			&
			30 & $\pattern{scale=0.5}{2}{1/1,2/2}{0/1,1/2,2/0,1/0,1/1,2/1,0/2}$ & Theorem~\ref{triv-thm-11}  
 			\\[5pt]
 			\hline
 			 12 & $\pattern{scale = 0.5}{2}{1/1,2/2}{0/0,0/1,0/2,1/0,2/0}$ & Theorem~\ref{triv-thm-12}
 			&
			33 & $\pattern{scale=0.5}{2}{1/1,2/2}{0/1,1/2,2/0,1/0,0/2,2/1}$   & Theorem~\ref{thm-pat-33} 
 			\\[5pt]
 			\hline
 			 13 & $\pattern{scale = 0.5}{2}{1/1,2/2}{0/0,0/1,0/2,1/0,1/2,2/0,2/1,2/2}$ & Theorem~\ref{triv-thm-13}
 			&
 			34 & $\pattern{scale=0.5}{2}{1/1,2/2}{0/1,1/2,0/0,2/2,1/0,1/1,2/1}$ &  Theorem~\ref{triv-thm-11} 
  			\\[5pt]
 			\hline
 			14 &   $\pattern{scale = 0.5}{2}{1/1,2/2}{0/1,1/1,1/2,1/0,1/2,2/1}$ & Theorem~\ref{triv-thm-11}
 			& 
			36 & $\pattern{scale=0.5}{2}{1/1,2/2}{0/1,1/2,0/0,1/0,1/1,2/1}$   &   Theorem~\ref{triv-thm-11}  
 \\[5pt]
 			\hline
 			 16 &  $\pattern{scale = 0.5}{2}{1/1,2/2}{0/1,2/0,1/0,0/2}$ &  Theorem~\ref{thm-pat-16}
 			&
			45 & $\pattern{scale=0.5}{2}{1/1,2/2}{0/1,1/2,1/0,1/1,2/1,0/2}$     &  Theorem~\ref{triv-thm-11} 
\\[5pt]
 			\hline
 			17 & $\pattern{scale = 0.5}{2}{1/1,2/2}{0/1,1/2,0/0,2/0,1/0,0/2,2/1}$ & Theorem~\ref{thm-pat-17}
 			&
 			55 & $\pattern{scale = 0.5}{2}{1/1,2/2}{0/1,1/2,0/0,2/0,1/1,2/1}$     &  Theorem~\ref{thm-pat-55} 
 \\[5pt]
 			\hline
 			19 & $\pattern{scale = 0.5}{2}{1/1,2/2}{0/1,0/2,1/1,1/2,2/0,2/2}$ &  Theorem~\ref{triv-thm-19}
 			&
  			 63 & $\pattern{scale = 0.5}{2}{1/1,2/2}{0/1,1/2,0/0,2/1,2/0}$ & Theorem~\ref{thm-pat-63}
\\[5pt]
 			\hline
            20 & $\pattern{scale = 0.5}{2}{1/1,2/2}{0/0,0/1,0/2,1/1,1/2,2/0,2/1}$ &  Theorem~\ref{triv-thm-20}  
 			&
  			64 & $\pattern{scale = 0.5}{2}{1/1,2/2}{0/1,1/2,2/0,0/2,1/1}$ & Theorem~\ref{thm-pat-64} 			
\\[5pt]
 			\hline 
 		\end{tabular}
\end{center} 	
\end{footnotesize}}
 	\caption{The patterns for which we derive distributions in this paper. Patterns' numbers come from \cite{Hilmarsson2015Wilf,KZ,KZZ}.}\label{tab-1}
\end{table}

\section{Preliminaries}\label{prelim}
 
Let $S_n$ be the set of all permutations of length $n$, which we call $n$-permutations. For example, $S_3=\{123, 132, 213, 231, 312, 321\}$. For $\sigma = \sigma_1 \sigma_2 \cdots \sigma_n \in S_n$, let $\sigma^r=\sigma_n\sigma_{n-1}\cdots\sigma_1$ and $\sigma^c= (n + 1 - \sigma_1)(n + 1 - \sigma_2)\cdots (n + 1-\sigma_n)$
denote the {\em reverse} and {\em complement} of $\sigma$, respectively. Then $\sigma^{rc}=(n + 1-\sigma_n)(n + 1-\sigma_{n-1})\cdots (n + 1-\sigma_1)$. For example, $(2431)^r=1342$, $(2431)^c=3124$ and $(2431)^{rc}=4213$. If $\sigma$ is any sequence of distinct numbers, then the {\em reduced form} of $\sigma$, denoted as $\red(\sigma)$, is obtained from $\sigma$ by replacing the smallest element by 1, the next smallest element by 2, etc. For example, $\red(2648)=1324$. 
 
A permutation $\sigma = \sigma_1 \sigma_2 \cdots \sigma_n \in S_n$ is called a {\em king permutation} if $\left| \sigma_{i+1}-\sigma_i \right| > 1$ for each $1 \leq i \leq n-1$.
Let $K_n$ be the set of all king permutations of length $n$, which we call  king $n$-permutations, and $K=\cup_{n\geq 0}K_n$. For example, $K_4 = \{3142, 2413\}$. For a pattern $p$ and a king permutation $\sigma$, we let $p(\sigma)$ denote the number of occurrences of $p$ in $\sigma$. Also, let $K_n(p)$ denote the set of all king $n$-permutations avoiding $p$ and $K(p)=\cup_{n\geq 0}K_n(p)$. King permutations are precisely  those permutations that avoid the patterns $\pattern{scale = 0.5}{2}{1/1,2/2}{0/1,1/1,1/2,1/0,1/2,2/1}$ and $\pattern{scale = 0.5}{2}{2/1,1/2}{0/1,1/1,1/2,1/0,1/2,2/1}$. Let $K_{n}^{s}$ be the set of all king $n$-permutations that do not begin with the smallest element, and let $K_{n}^{\ell}$ be the set of all king $n$-permutations that do not end with the largest element. Also, $K^s=\cup_{n\geq 0}K^s_n$ and $K^{\ell}=\cup_{n\geq 0}K^{\ell}_n$. We also let $K^{s\ell}=K^{s}\cap K^{\ell}$ and $K^{\ell s}$ is obtained from $K^{s\ell}$ by applying the complement operation to each permutation.
 
Let $A_n$ be the number of all king $n$-permutations.  Riordan \cite{Riordan} derived a recurrence relation for $A_n$ in 1965: $A_0 =A_1 =1$, $A_2 =A_3 =0$, and for $n\geq 4$,
$$A_n = (n+1)A_{n-1} -(n-2)A_{n-2} -(n-5)A_{n-3} +(n-3)A_{n-4}.$$ The initial values for $A_n$ are
$$1, 1, 0, 0, 2, 14, 90, 646, 5242, 47622, 479306, 5296790, 63779034,\cdots.$$
It is known \cite{Claesson} that for $n\geq 4$, 
$$A_n=n!+\sum_{k=1}^{n}(-1)^k\sum_{i=1}^{k}\binom{k-1}{i-1}\binom{n-k}{i}2^i(n-k)!.$$
Moreover, Flajolet and Sedgewick \cite{FS} showed that
\begin{equation}\label{gf-kings}A(t)=\sum_{n\geq 0}A_n t^n=\sum_{n\geq 0}\frac{n!t^n (1-t)^n}{(1+t)^n},\end{equation} 
which will be used frequently in our paper. Let $B_n$ (resp., $C_n$) be the number of all king $n$-permutations that do not begin with the smallest element (resp., and do not end with the largest element). In particular, $B_0=C_0=1$ and $B_1=C_1=0$. By reversing and complementing king permutations, $B_n$ is also equal to the number of all king $n$-permutations that do not end with the largest element. Let $B(t)=\sum_{n\geq 0}B_n t^n$ and $C(t)=\sum_{n\geq 0}C_n t^n$.
 
\begin{lem}\label{lem-length-1} We have
\begin{equation}\label{B(x)}
B(t)=\frac{A(t)}{1+t}=\sum_{n\geq 0}\frac{n!t^n (1-t)^n}{(1+t)^{n+1}},
\end{equation}
\begin{equation}\label{C(x)}
C(t)=\frac{t}{1+t}+\frac{A(t)}{(1+t)^2}=\frac{t}{1+t}+\sum_{n\geq 0}\frac{n!t^n (1-t)^n}{(1+t)^{n+2}}.
\end{equation}
The initial terms in the expansion of $B(t)$ and $C(t)$ are
\begin{footnotesize}
$$B(t)=1+2t^4+12t^5+78t^6+568t^7+4674t^8+42948t^9+436358t^{10}+\cdots;$$
$$C(t)=1+2t^4+10t^5+68t^6+500t^7+4174t^8+38774t^9+397584t^{10}+\cdots.$$
\end{footnotesize}
\end{lem}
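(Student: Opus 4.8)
The plan is to establish the two generating function identities by a bijective/combinatorial decomposition of king permutations according to their first (respectively first and last) entries, and then to read off the closed forms from the formula~\eqref{gf-kings} for $A(t)$.

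First I would handle $B(t)$. A king $n$-permutation $\sigma$ for $n\geq 2$ either begins with its smallest element $1$ or it does not; the latter are exactly the permutations counted by $B_n$. If $\sigma$ begins with $1$, then since $\sigma$ is a king permutation we must have $\sigma_2\geq 3$, and deleting the initial $1$ and reducing produces an arbitrary king $(n-1)$-permutation that does \emph{not} begin with its smallest element (because $\sigma_2-1\geq 2$, so after reduction the new first entry is at least $2$); conversely any such permutation arises this way. Hence $A_n = B_n + B_{n-1}$ for $n\geq 2$, and one checks the small cases $n=0,1$ directly against $B_0=1$, $B_1=0$, so in fact $A_n=B_n+B_{n-1}$ for all $n\geq 1$ with $A_0=B_0=1$. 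Translating to generating functions gives $A(t)=B(t)+tB(t)=(1+t)B(t)$, i.e.\ $B(t)=A(t)/(1+t)$; substituting~\eqref{gf-kings} and absorbing the extra $1+t$ into the denominator yields the stated series $\sum_{n\geq0} n!\,t^n(1-t)^n/(1+t)^{n+1}$.

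For $C(t)$ I would iterate the same idea on the other end. Among king permutations not beginning with the smallest element (the $B_n$ of them), separate those that end with the largest element $n$ from those that do not (the $C_n$ of them). If $\sigma\in K^s_n$ ends with $n$ then $\sigma_{n-1}\leq n-2$, and deleting the final $n$ and reducing gives a king $(n-1)$-permutation still not beginning with its smallest element and — because the old penultimate entry $\sigma_{n-1}\le n-2$ stays $\le n-2$ after reduction, hence is not the new maximum $n-1$ — not ending with its largest element; this map is a bijection onto $K^{s\ell}_{n-1}=K^{\ell s}_{n-1}$ once one accounts for the tiny exceptional permutations. Being careful with the base cases (the permutation $1$ of length $1$ contributes the lone $t/(1+t)$ discrepancy, since $12$ is not a king permutation but the decomposition would otherwise demand it), one gets $B_n = C_n + C_{n-1}$ for $n$ large with a correction in low degree, equivalently $B(t) = (1+t)C(t) - \frac{t}{1+t}\cdot\text{(something)}$; solving gives $C(t) = \frac{t}{1+t} + \frac{B(t)}{1+t} = \frac{t}{1+t} + \frac{A(t)}{(1+t)^2}$, and again substituting~\eqref{gf-kings} produces $\frac{t}{1+t}+\sum_{n\geq 0} n!\,t^n(1-t)^n/(1+t)^{n+2}$.

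Finally I would verify the claimed initial expansions of $B(t)$ and $C(t)$ by expanding the right-hand sides (or equivalently by dividing the known series $1,1,0,0,2,14,90,\dots$ for $A(t)$ by $1+t$ and $(1+t)^2$) through degree $10$. The main obstacle is not the generating-function algebra, which is routine once the recurrences are in hand, but rather pinning down the exact low-degree corrections in the two recurrences: king permutations are empty in lengths $2$ and $3$, so the naive ``delete the extreme entry and reduce'' bijection fails for a handful of short permutations, and the $\frac{t}{1+t}$ term in $C(t)$ is precisely the bookkeeping needed to repair the end-deletion step. I would therefore state the recurrences only for $n$ above the relevant threshold, check all small $n$ by hand against the listed initial values, and confirm that the resulting generating-function identities reproduce those values.
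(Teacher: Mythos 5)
Your derivation of $B(t)$ is essentially the paper's: both decompose $K$ according to whether the permutation begins with its smallest element and observe that deleting an initial $1$ lands bijectively in $K^s$ (since $\sigma_2\geq 3$ forces the reduced first entry to exceed $1$), giving $A(t)=(1+t)B(t)$. For $C(t)$, however, you take a genuinely different route. The paper stays with inclusion--exclusion over all of $K$: it writes $C(t)=A(t)-t-2t\bigl(B(t)-1\bigr)+t^2\bigl(C(t)-1\bigr)$, subtracting the permutations that begin with $1$ and those that end with the maximum and adding back the doubly-subtracted ones, then solves using $B(t)=A(t)/(1+t)$. You instead iterate the peeling argument on the other end: within $K^s_n$, deleting a terminal maximum gives a bijection onto $K^{s\ell}_{n-1}$ (the penultimate entry is at most $n-2$, so the image avoids ending with its new maximum, and appending $n$ to any $\tau\in K^{s\ell}_{n-1}$ is reversible), yielding $B_n=C_n+C_{n-1}$ for $n\geq 2$ and hence $B(t)=(1+t)C(t)-t$. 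Your approach is arguably cleaner in that it treats $C$ from $B$ exactly as $B$ was obtained from $A$, at the cost of having to locate the low-degree correction precisely; the paper's inclusion--exclusion avoids any exceptional cases but couples three series at once. One small inaccuracy in your write-up: the lone discrepancy occurs at $n=1$, not $n=2$ --- the empty permutation in $K^{s\ell}_0$ would be sent to the permutation $1$, which begins with its own smallest element and so is not in $K^s_1$; it has nothing to do with $12$ failing to be a king permutation. The correction term is exactly $-t$, i.e.\ $B(t)=(1+t)C(t)-t$, which gives $C(t)=\tfrac{t}{1+t}+\tfrac{B(t)}{1+t}=\tfrac{t}{1+t}+\tfrac{A(t)}{(1+t)^2}$ as claimed, so the final formulas and the numerical checks all go through.
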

 
\begin{proof} We claim that 
\begin{equation}\label{av-length-1b}B(t)+tB(t)=A(t).\end{equation}
Indeed, each $\sigma\in K$, counted by the $A(t)$ on the right-hand side of \eqref{av-length-1b}, either belongs to $K^s$ (and thus is counted by the $B(t)$ on the left-hand side of \eqref{av-length-1b}), or begins with the smallest element (and thus is counted by the $tB(t)$). The formula for $B(t)$ now follows from (\ref{gf-kings}) and (\ref{av-length-1b}).

Next, we claim that 
\begin{equation}\label{rec-for-C}C(t)=A(t)-t-2t\big(B(t)-1\big)+t^2\big(C(t)-1\big).\end{equation}
Indeed, any king permutation counted by $C(t)$ can be obtained from all king permutations by subtracting the permutation 1 (the term $t$) and any other king permutation of length at least 2 that starts with 1 (the term $t\big(B(t)-1\big)$) or ends with the largest element (the term $t\big(B(t)-1\big)$). However, king permutations beginning with 1 and ending with the largest element (the term $t^2\big(C(t)-1\big)$) have been subtracted twice, and they need to be added back.
\end{proof}
 
Occurrences of the pattern $\pattern{scale=0.5}{1}{1/1}{0/1,1/0}$ in permutations are known as {\em strong fixed points}. The distribution of strong fixed points on permutations is derived in~\cite{KZ}. In the next theorem we derive distributions of strong fixed points on (restricted) king permutations. These distributions will be frequently used throughout the paper.

\begin{thm}\label{thm-length-1} 
Let
\begin{footnotesize}
$$A(t,u):=\sum_{n\geq 0}t^n\sum_{\sigma\in K_n}u^{\pattern{scale=0.5}{1}{1/1}{0/1,1/0}(\sigma)},\ B(t,u):=\sum_{n\geq 0}t^n\sum_{\sigma\in K_{n}^{s}}u^{\pattern{scale=0.5}{1}{1/1}{0/1,1/0}(\sigma)},\ C(t,u):=\sum_{n\geq 0}t^n\sum_{\sigma\in K_{n}^{s\ell}}u^{\pattern{scale=0.5}{1}{1/1}{0/1,1/0}(\sigma)},$$
\end{footnotesize}
and let $P(t)=A(t,0)$ be the generating function for $K(\hspace{-1mm}\pattern{scale=0.5}{1}{1/1}{1/0,0/1})$.
Then, $P(t)$ is also the generating function for $K(\hspace{-1mm}\pattern{scale=0.5}{1}{1/1}{0/0,1/1})$. Moreover, 
\begin{equation}\label{4-equations}
K(\hspace{-1mm}\pattern{scale=0.5}{1}{1/1}{1/0,0/1})=K^s(\hspace{-1mm}\pattern{scale=0.5}{1}{1/1}{1/0,0/1})=K^{\ell}(\hspace{-1mm}\pattern{scale=0.5}{1}{1/1}{1/0,0/1})=  K^{s\ell}(\hspace{-1mm}\pattern{scale=0.5}{1}{1/1}{1/0,0/1})=K^{\ell s}(\hspace{-1mm}\pattern{scale=0.5}{1}{1/1}{1/0,0/1}),
\end{equation}
\begin{footnotesize}
\begin{equation}\label{avd-begin-smallest-dist}P(t)=\frac{(1+t)A(t)}{1+t+tA(t)},\end{equation}
\begin{equation}\label{fixed-strict-dist}A(t,u)=\sum_{n\geq 0}t^n\sum_{\sigma\in K_n}u^{\pattern{scale=0.5}{1}{1/1}{0/0,1/1}(\sigma)}=\frac{(1+ut)(1+t)A(t)}{1+t(1+u+ut)+t(1-u)A(t)},\end{equation}
\begin{equation}\label{not-begin-end-extreme}
B(t,u)=\sum_{n\geq 0}t^n\sum_{\sigma\in K_{n}^{\ell}}u^{\pattern{scale=0.5}{1}{1/1}{1/0,0/1}(\sigma)}=\frac{(1+t)A(t)}{1+t(1+u+ut)+t(1-u)A(t)},
\end{equation}
\begin{equation}\label{not-begin-end-beginning}
C(t,u)=\sum_{n\geq 0}t^n\sum_{\sigma\in K^{\ell s}_n}u^{\pattern{scale=0.5}{1}{1/1}{0/0,1/1}(\sigma)}=\frac{ut}{1+ut}+\frac{(1+t)A(t)}{(1+ut)\big(1+t(1+u+ut)+t(1-u)A(t)\big)},
\end{equation}
\end{footnotesize}
where $A(t)$ is given by (\ref{gf-kings}). The initial terms in the expansion of  $A(t,u)$, $B(t,u)$, and $C(t,u)$  are
\begin{footnotesize}
$$A(t,u)=1+ut+2t^4+(10+4u)t^5+(68+20u+2u^2)t^6+(500+136u+10u^2)t^7+\cdots;$$
$$B(t,u)=1+2t^4+(10+2u)t^5+(68+10u)t^6+(500+68u)t^7+(4174+500u)t^8+\cdots;$$
$$C(t,u)=1+2t^4+10t^5+68t^6+500t^7+4174t^8+\cdots.$$
\end{footnotesize} 
\end{thm}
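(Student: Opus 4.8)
The plan is to derive everything from two structural bijections/decompositions on king permutations, mirroring the proof of Lemma~\ref{lem-length-1}, and then to repeatedly substitute into the generating function $A(t)$ from~\eqref{gf-kings}. The starting observation is that an occurrence of the strong fixed point pattern $\pattern{scale=0.5}{1}{1/1}{0/1,1/0}$ at position $i$ in $\sigma \in K_n$ means $\sigma_i = i$ and every entry before position $i$ is smaller than $i$ while every entry after is larger. In a \emph{king} permutation the only strong fixed points can occur in very constrained ways: since $|\sigma_{i+1}-\sigma_i|>1$, a strong fixed point at $i$ with $1<i<n$ would force $\sigma_{i-1}, \sigma_{i+1} \in \{i-1,i+1\}$ to be avoided, so the block structure forces the prefix $\sigma_1\cdots\sigma_{i-1}$ to be a king permutation of $\{1,\dots,i-1\}$ \emph{not ending in $i-1$} and the suffix to be (a shift of) a king permutation of $\{i+1,\dots,n\}$ \emph{not beginning with $i+1$}. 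A strong fixed point at $i=1$ is just $\sigma$ beginning with $1$; at $i=n$ it is $\sigma$ ending with $n$. This gives the recursive "peeling" of strong fixed points that underlies all the functional equations.

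First I would establish~\eqref{4-equations}: the observation above shows that removing the leading strong fixed point (if $\sigma$ begins with $1$) or not does not change whether the \emph{remaining} strong fixed points can be peeled, and symmetrically for the trailing position; so the set $K(\pattern{scale=0.5}{1}{1/1}{1/0,0/1})$ of king permutations with \emph{no} strong fixed point coincides with the avoidance sets over $K^s, K^\ell, K^{s\ell}, K^{\ell s}$, because any strong fixed point forced to be interior already obstructs all five, and the boundary obstructions are exactly what $K^s$, $K^\ell$ remove. The equality of generating functions for $K(\pattern{scale=0.5}{1}{1/1}{1/0,0/1})$ and $K(\pattern{scale=0.5}{1}{1/1}{0/0,1/1})$ then follows by the reverse-complement map $\sigma \mapsto \sigma^{rc}$, which preserves $K_n$ and sends the pattern $\pattern{scale=0.5}{1}{1/1}{1/0,0/1}$ to $\pattern{scale=0.5}{1}{1/1}{0/0,1/1}$. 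Next, to get $P(t)$ in~\eqref{avd-begin-smallest-dist}, I would write a decomposition of an arbitrary king permutation by splitting off its maximal initial run of "peelable" strong fixed points: $\sigma$ decomposes uniquely as a (possibly empty) sequence of leading strong fixed points followed by a king permutation with no strong fixed point, interleaved according to the block picture above. Concretely this should yield a relation of the shape $A(t) = \bigl(\text{something built from } t \text{ and } P(t)\bigr)$, which solved for $P(t)$ gives $\dfrac{(1+t)A(t)}{1+t+tA(t)}$; I expect the precise bookkeeping to be the relation $A(t)(1+t+tA(t)) = (1+t)A(t)\cdot(\text{generating function of the chain})$ collapsing because strong-fixed-point-free blocks concatenate in only limited ways in a king permutation.

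For the refined generating functions I would track the statistic with the variable $u$. The key refinement of the peeling: in $A(t,u)$, each peeled strong fixed point contributes a factor $ut$, except that the block-structure constraints at the boundary (the prefix must lie in $K^s$-type sets, the suffix in $K^\ell$-type sets) mean that the "core" is counted by a series with no strong fixed points, i.e.\ by $P(t)$ up to the $1+t+tA(t)$ denominator already computed. Setting up the linear system among $A(t,u)$, $B(t,u)$, $C(t,u)$ — each is "(core with no SFP) $+ ut\cdot(\text{appropriate smaller series})$" with the cross-terms dictated by whether removing a leading/trailing extreme element lands you in $K^s$, $K^\ell$, or their intersection — gives three equations. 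Solving this linear system (using $B(t)=A(t)/(1+t)$, $C(t)=\frac{t}{1+t}+\frac{A(t)}{(1+t)^2}$ from Lemma~\ref{lem-length-1} to pin down the $u=1$ specializations) should produce the stated rational expressions in $A(t)$, $t$, $u$; the identity $A(t,u)=\sum t^n\sum_{\sigma\in K_n}u^{\pattern{scale=0.5}{1}{1/1}{0/0,1/1}(\sigma)}$ and the analogous rewrites for $B, C$ follow from reverse-complement symmetry. I would finish by checking the listed initial terms by direct expansion, e.g.\ $K_4=\{3142,2413\}$ has no strong fixed point (consistent with the $2t^4$ terms with no $u$), and $K_5$ has $14$ elements of which $4$ have one strong fixed point (consistent with $(10+4u)t^5$ in $A(t,u)$).

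The main obstacle I anticipate is getting the boundary bookkeeping exactly right — namely, correctly accounting for the interaction between a leading strong fixed point (the entry $1$ in position $1$), a trailing one (the entry $n$ in position $n$), and the king condition at the two "seams" where the peeled parts meet the core. This is the same subtlety that made~\eqref{rec-for-C} require an inclusion–exclusion correction (the $t^2(C(t)-1)$ term added back), and here it is what distinguishes the three functional equations and produces the extra $\dfrac{ut}{1+ut}$ and $\dfrac{1}{1+ut}$ factors in~\eqref{not-begin-end-beginning}. Once the linear system is correctly assembled, solving it and simplifying against~\eqref{gf-kings} is routine algebra.
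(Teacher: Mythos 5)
Your proposal correctly identifies the structural ingredients (strong fixed points interact with the king condition only at the ``seams'', a symmetry reduces the two length-one patterns to one, and a small linear system of functional equations should close the computation), but as written it has a genuine gap: the functional equations that constitute the proof are never actually derived. The paper's argument rests on three concrete identities --- $P(t)+tP(t)B(t)=A(t)$ and its refinement $P(t)+utP(t)B(t,u)=A(t,u)$ (split at the \emph{leftmost} strong fixed point: the prefix is a strong-fixed-point-free king permutation counted by $P(t)$, which automatically cannot end with its largest element, and the suffix reduces to an element of $K^s$, counted by $B(t,u)$ with each of its strong fixed points still contributing), the relation $(1+ut)B(t,u)=A(t,u)$, and the inclusion--exclusion $C(t,u)=A(t,u)-ut-2ut\big(B(t,u)-1\big)+(ut)^2\big(C(t,u)-1\big)$ --- and at every point where your write-up should produce one of these you instead write ``should yield'', ``I expect'', or ``should produce'', ending by listing the boundary bookkeeping as an anticipated obstacle. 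That bookkeeping \emph{is} the proof; without the explicit equations none of the stated rational expressions can be verified.

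Two concrete inaccuracies would also need repair. First, the symmetry is wrong: reverse-complement sends box $(i,j)$ to $(1-i,1-j)$ and hence fixes the pattern $\pattern{scale=0.5}{1}{1/1}{0/1,1/0}$ rather than exchanging it with $\pattern{scale=0.5}{1}{1/1}{0/0,1/1}$; you need reverse alone (or complement alone), as the paper uses. Second, your decomposition ``a maximal initial run of leading strong fixed points followed by a king permutation with no strong fixed point'' misdescribes the structure: strong fixed points need not sit at the start, and the correct full decomposition has $k+1$ strong-fixed-point-free blocks interleaved with the $k$ strong fixed points, the interior blocks forced nonempty by the king condition. Carried out correctly that block decomposition does give $A(t)=P(t)+\frac{tP(t)^2}{1+t-tP(t)}$, which is equivalent to \eqref{avd-begin-smallest-dist}, so your route is salvageable and genuinely a little different from the paper's; the paper sidesteps the full decomposition by splitting only at the leftmost occurrence and pairing that single split with the elementary relation $A(t,u)=(1+ut)B(t,u)$, which keeps the $u$-refinement and the $C(t,u)$ case to short linear manipulations.
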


\begin{proof} 
Because reverse and/or complement of a king permutation is a king permutation, $P(t)$ is also the generating function for $K(\hspace{-1mm}\pattern{scale=0.5}{1}{1/1}{0/0,1/1})$, and for $A(t,u)$, we only need to consider the pattern $\pattern{scale=0.5}{1}{1/1}{0/1,1/0}$. 
Moreover, under the composition of reverse and complement operations, permutations in $K^s_n$ go bijectively to permutations in $K^{\ell}_n$, so for $B(t,u)$ it is sufficient to consider $K^s_n$.

Note that (\ref{4-equations}) follows from $P(t)=A(t,0)=B(t,0)=C(t,0)$ as no king permutation avoiding  $\pattern{scale=0.5}{1}{1/1}{0/1,1/0}$ can begin with 1 or end with the largest element. 

We claim that 
\begin{equation}\label{av-length-1}P(t)+tP(t)B(t)=A(t).\end{equation}
Indeed, each $\sigma\in K$, counted by the $A(t)$ on the right-hand side of \eqref{av-length-1}, either avoids $\pattern{scale=0.5}{1}{1/1}{0/1,1/0}$ (and thus is counted by the $P(t)$ on the left-hand side of \eqref{av-length-1}), or contains at least one occurrence of $\pattern{scale=0.5}{1}{1/1}{0/1,1/0}$. Consider the leftmost occurrence of $\pattern{scale=0.5}{1}{1/1}{0/1,1/0}$, an element $a$ in $\sigma$, which corresponds to the $t$ in $tP(t)B(t)$. To the left of $a$, we must have a $\pattern{scale=0.5}{1}{1/1}{0/1,1/0}$ -avoiding permutation $\sigma'\in K^{\ell}$ ($\sigma'$ cannot end with its largest element) formed by the smallest elements of $\sigma$ and counted by $P(t)$ because any king permutation avoiding $\pattern{scale=0.5}{1}{1/1}{0/1,1/0}$ cannot end with the largest element. To the right of $a$, we can have any permutation  $\sigma''$, $\red(\sigma'')\in K^s$,  formed by the largest elements of $\sigma$ and counted by $B(t)$. 
Hence, we obtain the equation \eqref{av-length-1} due to the independence of the choices of $\sigma'$ and $\sigma''$. 

Similarly, we can derive 
\begin{equation}\label{length-1-formula}P(t)+utP(t)B(t,u)=A(t,u)\end{equation}
where the element $a$ will contribute the factor $ut$. 

Our derivation of the following equation is similar to that of (\ref{av-length-1b}).
\begin{equation}\label{length-1-formulab}B(t,u)+utB(t,u)=A(t,u).\end{equation}  

Combining \eqref{av-length-1}--\eqref{length-1-formulab}, we obtain the desired result.

Next, we claim that 
\begin{equation}\label{rec-for-C-u}C(t,u)=A(t,u)-ut-2ut\big(B(t,u)-1\big)+(ut)^2\big(C(t,u)-1\big).\end{equation}
Indeed, any king permutation counted by $C(t,u)$ can be obtained from all king permutations by subtracting the permutation 1 (the term $ut$) and any other king permutation of length at least 2 that start with 1 (the term $ut\big(B(t,u)-1\big)$) or end with the largest element (the term $ut\big(B(t,u)-1\big)$). However, king permutations beginning with 1 and ending with the largest element (the term $u^2t^2\big(C(t,u)-1\big)$) have been subtracted twice.
\end{proof}

\section{``Trivial'' distributions}\label{trivial-sec}

A ``trivial'' distribution refers to a situation where either the pattern in question can occur at most once, or the occurrences of the pattern can be easily found. 

\begin{thm}\label{triv-thm-11}  For  patterns $\pattern{scale = 0.5}{2}{1/1,2/2}{0/0,0/1,0/2,1/0,1/1,1/2,2/0,2/1,2/2 }$, $\pattern{scale = 0.5}{2}{1/1,2/2}{0/1,1/1,1/2,1/0,1/2,2/1}$, $\pattern{scale=0.5}{2}{1/1,2/2}{0/1,1/2,2/0,1/0,1/1,2/1,0/2}$, $\pattern{scale=0.5}{2}{1/1,2/2}{0/1,1/2,0/0,2/2,1/0,1/1,2/1}$, $\pattern{scale=0.5}{2}{1/1,2/2}{0/1,1/2,0/0,1/0,1/1,2/1}$ and $\pattern{scale=0.5}{2}{1/1,2/2}{0/1,1/2,1/0,1/1,2/1,0/2}$, $A(t,u)=A(t)$ given by (\ref{gf-kings}).
\end{thm}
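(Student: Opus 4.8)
The plan is to show that each of the six mesh patterns in the statement simply cannot occur in any king permutation, so that the statistic is identically zero and the generating function collapses to $A(t,u)=A(t)$. The common feature of all six patterns is that the underlying classical pattern is $\tau=12$ sitting in two adjacent positions with the box $\boks{1}{1}$ (the box between the two dots, both horizontally and vertically) shaded, together with various other shaded boxes. So first I would argue the key structural fact: if a mesh pattern $(\tau,R)$ with $\tau = 12$ has $\boks{1}{1}\in R$, then an occurrence of $(\tau,R)$ in $\sigma$ at positions $i<j$ forces $j=i+1$ (no element of $\sigma$ may lie strictly between positions $i$ and $j$, because its value, being anything, would land in the forbidden box $\boks{1}{1}$ — wait, I need the box to the right of position $i$; let me restate). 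The point is that for each of the six patterns the shading includes the full vertical strip above and to the right of the left dot in the relevant column, or an appropriate combination, so that the two occurrence elements $\sigma_i$ and $\sigma_j$ must be in consecutive positions with consecutive values, i.e. $\sigma_{i+1}=\sigma_i+1$. That directly contradicts the king condition $|\sigma_{i+1}-\sigma_i|>1$.

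Concretely, I would handle the six cases by reading off which boxes are shaded. For pattern~11, $R$ is the entire grid $\dbrac{0,2}\times\dbrac{0,2}$, so an occurrence is a pair of elements in consecutive positions with consecutive values and nothing else anywhere — impossible in $K_n$ for $n\ge 2$ already by the consecutivity, and the empty permutation and $K_1$ contribute no occurrence either. For patterns~14, 30, 34, 36, 45 I would point out that in each case the shading contains the boxes $\boks{1}{0},\boks{1}{1},\boks{1}{2}$ (the whole middle vertical strip) and the boxes $\boks{0}{1},\boks{2}{1}$ (or enough of the middle horizontal strip), which together force the two dots to be adjacent in position \emph{and} adjacent in value; either way $\sigma_{i+1}=\sigma_i+1$ is forced, contradicting the king property. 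A short sentence per pattern, citing the specific shaded boxes, suffices.

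Having established that the pattern $p$ satisfies $p(\sigma)=0$ for every $\sigma\in K$, the generating function identity is immediate: $A(t,u)=\sum_{n\ge 0}t^n\sum_{\sigma\in K_n}u^{p(\sigma)}=\sum_{n\ge 0}t^n\sum_{\sigma\in K_n}u^0=\sum_{n\ge 0}A_n t^n=A(t)$, which is~\eqref{gf-kings}. The main (and only real) obstacle is purely bookkeeping: correctly decoding the shaded-box lists $R$ from the \texttt{\textbackslash pattern} macro for all six patterns and verifying in each case that the shading indeed forces positional \emph{and} value adjacency of the two dots. I would present this as one clean lemma-style observation ("a mesh pattern of length~2 whose shading forces the two occurrence elements into adjacent positions with adjacent values occurs in no king permutation") followed by the six one-line verifications.
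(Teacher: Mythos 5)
Your proposal is correct and matches the paper's (one-line) proof: each of the six shadings contains the cross $\{\boks{0}{1},\boks{1}{0},\boks{1}{1},\boks{1}{2},\boks{2}{1}\}$, whose avoidance is exactly the defining property of king permutations stated in Section~\ref{prelim}, so no occurrence is possible and $A(t,u)=A(t)$. You merely spell out the position- and value-adjacency argument that the paper compresses into ``by definition.''
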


\begin{proof} 
By definition, no king permutation can contain occurrences of any of the six patterns, which completes the proof. \end{proof}

\begin{thm}\label{triv-thm-10} 
For $n\geq 2$, there are $A_n/2$ king $n$-permutations avoiding the pattern $\pattern{scale = 0.5}{2}{1/1,2/2}{0/0,0/1,0/2,2/0,2/1,2/2}$, and $A_n/2$ king $n$-permutations containing it exactly once. 
\end{thm}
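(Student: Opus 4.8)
The plan is to reduce the statement to a one-line symmetry argument by first decoding the mesh pattern. Write $p = \pattern{scale = 0.5}{2}{1/1,2/2}{0/0,0/1,0/2,2/0,2/1,2/2}$, so that $p = (\tau,R)$ with $\tau = 12$ and $R$ consisting of the three boxes in column $0$ together with the three boxes in column $2$. In an occurrence of $p$ in $\sigma = \sigma_1 \sigma_2 \cdots \sigma_n$ we select positions $a_1 < a_2$ with $\sigma_{a_1} < \sigma_{a_2}$; the shading of all of column $0$, namely $\boks{0}{0},\boks{0}{1},\boks{0}{2}$, forces there to be no entry of $\sigma$ to the left of position $a_1$, hence $a_1 = 1$, and the shading of all of column $2$ forces $a_2 = n$. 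Since column $1$ is unshaded, no further condition is imposed. Consequently $p$ occurs in $\sigma$ if and only if $\sigma_1 < \sigma_n$, and in that case it occurs exactly once; equivalently, $\sigma$ avoids $p$ if and only if $\sigma_1 > \sigma_n$.

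Next I would invoke the reverse operation $\sigma \mapsto \sigma^r = \sigma_n \sigma_{n-1} \cdots \sigma_1$. Because $|\sigma_{i+1}-\sigma_i| = |\sigma_i - \sigma_{i+1}|$, the king condition is preserved, so $\sigma \mapsto \sigma^r$ is an involution on $K_n$. Moreover $(\sigma^r)_1 = \sigma_n$ and $(\sigma^r)_n = \sigma_1$, so $\sigma_1 < \sigma_n$ holds precisely when $(\sigma^r)_1 > (\sigma^r)_n$. Thus reverse restricts to a bijection between the king $n$-permutations containing $p$ exactly once and those avoiding $p$.

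Finally, for $n \ge 2$ the entries $\sigma_1$ and $\sigma_n$ are distinct, so $K_n$ is the disjoint union of $\{\sigma \in K_n : \sigma_1 < \sigma_n\}$ and $\{\sigma \in K_n : \sigma_1 > \sigma_n\}$. Together with the bijection above, each of these sets has cardinality $|K_n|/2 = A_n/2$, which is exactly the claim. (One could use the complement operation $\sigma \mapsto \sigma^c$ in place of reverse with the same effect, since $\sigma_1 < \sigma_n$ iff $n+1-\sigma_1 > n+1-\sigma_n$.) The argument is essentially obstacle-free; the only step requiring a little care is reading off, from the shading of the two outer columns, that every occurrence of $p$ is forced to use positions $1$ and $n$.
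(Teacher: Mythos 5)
Your argument is correct and is essentially the same as the paper's: both identify that the fully shaded outer columns force any occurrence to use positions $1$ and $n$, and both use the reverse involution on $K_n$ to pair permutations with $\sigma_1<\sigma_n$ against those with $\sigma_1>\sigma_n$. Your write-up just spells out the details that the paper leaves implicit.
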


\begin{proof} 
The only possible occurrence of $\pattern{scale = 0.5}{2}{1/1,2/2}{0/0,0/1,0/2,2/0,2/1,2/2}$ must involve the leftmost and rightmost elements. Clearly, reverse of a king permutation is a king permutation. Hence, exactly in half of king  $n$-permutations the leftmost element is smaller than the rightmost element. 
\end{proof}

\begin{thm}\label{triv-thm-12} 
Let $p=\pattern{scale = 0.5}{2}{1/1,2/2}{0/0,0/1,0/2,1/0,2/0}$, $E(t,u)=\sum_{n\geq 0}t^n\sum_{\sigma\in K_n}u^{p(\sigma)}$, 
and $P(t)$ be the generating function for $K(p)$. Then, $$P(t)=t+\frac{A(t)}{1+t},\ \ \ E(t,u)=\frac{A(t)}{1+t}+\frac{tA(ut)}{1+ut}.$$ where $A(t)$ is given by (\ref{gf-kings}). The initial terms in the expansion of  $E(t,u)$ are
\begin{footnotesize}
$$1+t+2t^4+(12+2u^4)t^5+(78+12u^5)t^6+(568+78u^6)t^7+(4674+568u^7)t^8+\cdots.$$
\end{footnotesize}
\end{thm}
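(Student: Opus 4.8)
The plan is to first read off what an occurrence of $p=\pattern{scale = 0.5}{2}{1/1,2/2}{0/0,0/1,0/2,1/0,2/0}$ actually is. Since the boxes $\boks{0}{0},\boks{0}{1},\boks{0}{2}$ (the whole column to the left of the first dot) and $\boks{1}{0},\boks{2}{0}$ (which together with $\boks{0}{0}$ form the whole row below the first dot) are shaded, an occurrence of $p$ in $\sigma=\sigma_1\sigma_2\cdots\sigma_n$ is a pair of positions $i<j$ with $\sigma_i<\sigma_j$ such that no element of $\sigma$ lies to the left of position $i$ and no element of $\sigma$ has value smaller than $\sigma_i$. The first condition forces $i=1$, and the second forces $\sigma_1=1$. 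Conversely, if $\sigma_1=1$ then for every $j\in\{2,\dots,n\}$ the pair $(1,j)$ is an occurrence of $p$. Hence $p(\sigma)=n-1$ when $\sigma_1=1$, and $p(\sigma)=0$ otherwise.

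Next I would split $K_n$ according to whether $\sigma_1=1$. The king permutations with $\sigma_1\neq 1$ are exactly those counted by $B_n$, and each contributes $u^0$ to $E(t,u)$. For a king permutation with $\sigma_1=1$, the king condition forces $\sigma_2\geq 3$; deleting the leading $1$ and taking the reduced form gives a king $(n-1)$-permutation whose first entry is $\sigma_2-1\geq 2$, hence one that does not begin with its smallest element. This map is a bijection onto $K^s_{n-1}$: its inverse prepends a new minimum and shifts every entry up by $1$, which preserves $|\sigma_{i+1}-\sigma_i|>1$ and creates the first gap $|\sigma'_1+1-1|=\sigma'_1\geq 2$. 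Therefore the number of king $n$-permutations beginning with $1$ is $B_{n-1}$, and each such permutation contributes $u^{n-1}$.

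Combining the two cases, the coefficient of $t^n$ in $E(t,u)$ equals $B_n+B_{n-1}u^{n-1}$, so
$$E(t,u)=\sum_{n\geq 0}\big(B_n+B_{n-1}u^{n-1}\big)t^n=B(t)+t\sum_{n\geq 1}B_{n-1}(ut)^{n-1}=B(t)+tB(ut).$$
Substituting $B(t)=A(t)/(1+t)$ from Lemma~\ref{lem-length-1} (hence $B(ut)=A(ut)/(1+ut)$) yields $E(t,u)=\dfrac{A(t)}{1+t}+\dfrac{tA(ut)}{1+ut}$. Finally, since $p(\sigma)=0$ is equivalent to $\sigma_1\neq 1$, we have $P(t)=E(t,0)=B(t)+tB(0)=B(t)+t=t+\dfrac{A(t)}{1+t}$, using $B_0=1$; the stated initial terms then follow from the expansion of $B(t)$ recorded in Lemma~\ref{lem-length-1}.

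The only step that is not purely mechanical is the bijection, and within it the verification that prepending a new minimum to a king permutation not beginning with its minimum again produces a king permutation; this is immediate from $|\sigma'_1+1-1|=\sigma'_1\geq 2$, so I do not expect a genuine obstacle.
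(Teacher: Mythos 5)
Your proof is correct and follows essentially the same route as the paper's: you identify that an occurrence of $p$ forces the element $1$ in position $1$ with every later element completing an occurrence, and you use the same correspondence between king permutations beginning with $1$ and $K^s_{n-1}$ (the paper phrases this as the factor $t\big(B(ut)-1\big)$, you as $tB(ut)$ after handling $n=1$ coefficient-wise). The only cosmetic difference is that you assemble $E(t,u)=B(t)+tB(ut)$ directly rather than via $E(t,u)=P(t)+t\big(B(ut)-1\big)$; the content is identical.
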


\begin{proof}
Any occurrence of $p$ must start with the element 1 placed in position~1. Suppose a permutation $\sigma\in K$ begins with 1. To the right of 1 in $\sigma$,  we can have any non-empty permutation  $\sigma''$, $\red(\sigma'')\in K^s$,  formed by the largest elements of $\sigma$ and counted by $B(t)-1$. The permutations not beginning with 1 are counted by $P(t)$. To get avoidance, we take all king permutations and subtract those that contain the pattern $p$:
$$P(t)=A(t)-t\big(B(t)-1\big)$$ 
where the factor of $t$ corresponds to 1 in $\sigma$. By Lemma~\ref{lem-length-1}, we obtain the formula for $P(t)$.

Similarly, we can get
$$E(t,u)=P(t)+t\big(B(ut)-1\big)$$
where we used the fact that if a permutation begins with 1, then any element to the right of 1 will contribute an occurrence of  $\pattern{scale = 0.5}{2}{1/1,2/2}{0/0,0/1,0/2,1/0,2/0}$.
\end{proof}

\begin{thm}\label{triv-thm-13} 
Let $p = \pattern{scale = 0.5}{2}{1/1,2/2}{0/0,0/1,0/2,1/0,1/2,2/0,2/1,2/2}$, $E(t,u)=\sum_{n\geq 0}t^n\sum_{\sigma\in K_n}u^{p(\sigma)}$, 
and $P(t)$ be the generating function for $K(p)$. Then, $$P(t)=\frac{t^2}{1+t}+\frac{(1+2t)A(t)}{(1+t)^2},\ \ \ E(t,u)=\frac{t^2(1-u)}{1+t}+\frac{(1+2t+ut^2)A(t)}{(1+t)^2},$$
where $A(t)$ is given by (\ref{gf-kings}). The initial terms in the expansion of  $E(t,u)$ are
\begin{footnotesize}
$$1+t+2t^4+14t^5+(88+2u)t^6+(636+10u)t^7+(5174+68u)t^8+\cdots.$$
\end{footnotesize}
\end{thm}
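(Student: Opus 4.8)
The plan is to show first that $p$ can occur in a king permutation at most once, then to determine exactly when it does, and finally to translate this into the generating functions of Lemma~\ref{lem-length-1}. The classical pattern underlying $p$ is $12$, and in $p$ the whole left column $\boks{0}{0},\boks{0}{1},\boks{0}{2}$ and the whole right column $\boks{2}{0},\boks{2}{1},\boks{2}{2}$ are shaded. Hence in any occurrence of $p$ in $\sigma=\sigma_1\cdots\sigma_n$ the smaller selected entry must be $\sigma_1$ and the larger must be $\sigma_n$, so $\sigma_1<\sigma_n$; the shaded boxes $\boks{1}{0}$ and $\boks{1}{2}$ then force $\sigma_1<\sigma_k<\sigma_n$ for every $k$ with $1<k<n$. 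Since $\{\sigma_1,\dots,\sigma_n\}=\{1,\dots,n\}$, this forces $\sigma_1=1$ and $\sigma_n=n$, and conversely if $\sigma_1=1$ and $\sigma_n=n$ then $(\sigma_1,\sigma_n)$ is an occurrence of $p$ and is the only one. So $p(\sigma)\in\{0,1\}$, with $p(\sigma)=1$ precisely when $\sigma$ begins with $1$ and ends with $n$. Letting $Q(t)=\sum_{n\ge 0}Q_nt^n$ count king $n$-permutations that begin with $1$ and end with the largest element, we get $E(t,u)=P(t)+uQ(t)$ at once, and $A(t)=P(t)+Q(t)$ by setting $u=1$; it remains to find $Q(t)$.

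Next I would compute $Q(t)$ by a deletion bijection. Since $|\sigma_2-1|>1$ and $|n-\sigma_{n-1}|>1$ cannot hold simultaneously for $n\le 3$, we have $Q_n=0$ for $n\le 3$. For $n\ge 4$, given a king $n$-permutation $\sigma$ with $\sigma_1=1$, $\sigma_n=n$, delete the first and last entries and reduce: because $\sigma_2\ge 3$ and $\sigma_{n-1}\le n-2$, the word $\red(\sigma_2\cdots\sigma_{n-1})$ lies in $K^{s\ell}_{n-2}$. Conversely, for $\pi\in K^{s\ell}_{n-2}$ the word $1,\,\pi_1+1,\,\dots,\,\pi_{n-2}+1,\,n$ is a king $n$-permutation starting with $1$ and ending with $n$, the two new junctions being legal exactly because $\pi_1\ge 2$ and $\pi_{n-2}\le n-3$. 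As these maps are mutually inverse, $Q_n=C_{n-2}$ for $n\ge 4$; since $C_0=1$ and $C_1=C_2=C_3=0$, this yields $Q(t)=t^2\bigl(C(t)-1\bigr)$.

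Finally I would assemble and simplify: $P(t)=A(t)-t^2\bigl(C(t)-1\bigr)$ and $E(t,u)=A(t)-t^2(1-u)\bigl(C(t)-1\bigr)$. Substituting $C(t)=\frac{t}{1+t}+\frac{A(t)}{(1+t)^2}$ from Lemma~\ref{lem-length-1} gives $C(t)-1=\frac{A(t)-1-t}{(1+t)^2}$; putting everything over $(1+t)^2$, the coefficient of $A(t)$ becomes $(1+t)^2-t^2(1-u)=1+2t+ut^2$, while the remaining term collapses to $\frac{t^2(1-u)(1+t)}{(1+t)^2}=\frac{t^2(1-u)}{1+t}$, which is the claimed formula for $E(t,u)$; setting $u=0$ yields $P(t)$. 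A quick check against the recorded expansions of $A(t)$ and $C(t)$ confirms the listed initial terms (the coefficient of $u$ in $E(t,u)$ should be $Q(t)=2t^6+10t^7+68t^8+\cdots$).

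Because the pattern occurs at most once, this is a ``trivial'' distribution in the sense of this section and I anticipate no real obstacle. The one point requiring care is the low-degree boundary of the deletion bijection: there are no king $2$- or $3$-permutations, so $Q_2=Q_3=0$, whereas $C_0=1$; this mismatch is exactly why $Q(t)=t^2\bigl(C(t)-1\bigr)$ rather than $t^2C(t)$, and hence why the closed form for $E(t,u)$ picks up the extra summand $\frac{t^2(1-u)}{1+t}$. Beyond that, only routine bookkeeping in the algebraic simplification remains.
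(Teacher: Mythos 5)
Your proof is correct and takes essentially the same route as the paper: both identify that $p$ can occur only as the pair formed by a leftmost $1$ and a rightmost $n$, count such king permutations by $t^2\bigl(C(t)-1\bigr)$ via the middle block lying in $K^{s\ell}$, and then simplify using Lemma~\ref{lem-length-1}; you merely spell out the deletion bijection and the boundary cases more explicitly than the paper does. The only nitpick is at $n=1$: the permutation $1$ literally ``begins with $1$ and ends with its largest element'' yet contains no occurrence of $p$ (which needs two entries), so $Q$ should be defined as counting permutations containing $p$ — this is harmless since your final $Q(t)=t^2\bigl(C(t)-1\bigr)$ has no term below $t^6$.
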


\begin{figure}
\begin{center}
	\begin{tikzpicture}[scale=0.6, baseline=(current bounding box.center)]
	\foreach \x/\y in {0/0,0/1,0/2,1/0,1/2,2/0,2/1,2/2}		    
		\fill[gray!20] (\x,\y) rectangle +(1,1);
	\draw (0.01,0.01) grid (2+0.99,2+0.99);
	\filldraw (1,1) circle (3pt) node[above left] {{\small $a$}};
	\filldraw (2,2) circle (3pt) node[above right] {{\small $b$}};
	\node  at (1.5,1.5) {$A$};
	\end{tikzpicture}
\end{center}
\caption{Related to the proof of Theorem~\ref{triv-thm-13}}\label{pic-triv-thm-13}
\end{figure}
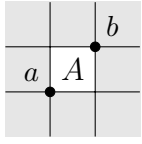

\begin{proof}
The only possible occurrence of $p$ can be formed by the element 1 in the leftmost position, and the largest element in the rightmost position. Any non-empty permutation  $\sigma'$, $\red(\sigma')\in K^s\cap K^\ell$, formed by the elements of $\sigma$ is counted by $C(t)-1$ given by (\ref{C(x)}). The king permutations avoiding $p$ are counted by $P(t)$. To get $P(t)$, we take all king permutations and subtract those that contain $p$:
$$P(t)=A(t)-t^2\big(C(t)-1\big), $$
where the factor of $t^2$ corresponds to $ab$, the only occurrence of $p$, presented schematically in Fig.~\ref{pic-triv-thm-13}. By Lemma~\ref{lem-length-1}, we obtain the formula for $P(t)$.

Similarly, we can get  $$E(t,u)=P(t)+ut^2\big(C(t)-1\big).$$ 
Indeed, any $\sigma \in K$ can contain at most one occurrence of $p$, and $\sigma \in K^s/K^{s\ell}$ avoids the pattern $p$. By Lemma~\ref{lem-length-1} , we get the formula for $E(t,u)$.
\end{proof}

\begin{thm}\label{triv-thm-19} 
Let $p = \pattern{scale = 0.5}{2}{1/1,2/2}{0/1,0/2,1/1,1/2,2/0,2/2}$, $E(t,u)=\sum_{n\geq 0}t^n\sum_{\sigma\in K_n}u^{p(\sigma)}$,
and $P(t)$ be the generating function for $K(p)$. Then, $$P(t)=\left(1+t-\frac{tA(t)}{1+t}\right)A(t),$$ $$E(t,u)=\left(1+t-ut-\frac{t(1-u)A(t)}{1+t}\right)A(t),$$
where $A(t)$ is given by (\ref{gf-kings}). The initial terms in the expansion of  $E(t,u)$ are
\begin{footnotesize}
$$1+t+2t^4+(12+2u)t^5+(76+14u)t^6+(556+90u)t^7+(4596+646u)t^8+\cdots.$$
\end{footnotesize}
\end{thm}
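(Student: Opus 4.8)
The plan is to first read off, from the six shaded boxes of $p$, exactly which pairs of positions $i<j$ form an occurrence in a king permutation $\sigma\in K_n$. An occurrence requires the ascent $\sigma_i<\sigma_j$. The boxes $\boks{0}{1},\boks{0}{2},\boks{1}{1},\boks{1}{2}$ then force every entry among $\sigma_1,\dots,\sigma_{j-1}$ other than $\sigma_i$ to be smaller than $\sigma_i$ (so that $\sigma_i=\max\{\sigma_1,\dots,\sigma_{j-1}\}$, and no entry before position $j$ exceeds $\sigma_j$); those small entries are precisely what the unshaded boxes $\boks{0}{0}$ and $\boks{1}{0}$ permit. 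The boxes $\boks{2}{0}$ and $\boks{2}{2}$ force every entry after position $j$ to lie strictly between $\sigma_i$ and $\sigma_j$, which is exactly what the unshaded box $\boks{2}{1}$ allows. Since now no entry of $\sigma$ exceeds $\sigma_j$, we get $\sigma_j=n$; and counting the $n-1-\sigma_i$ entries with value in $(\sigma_i,n)$ — all of which must sit after position $j$ — forces $n-j=n-1-\sigma_i$, i.e.\ $j=\sigma_i+1$. Hence an occurrence exists exactly when $\sigma$ decomposes as $\sigma=\sigma'\,n\,\sigma''$, where $\sigma'$ is a permutation of $\{1,\dots,j-1\}$ occupying positions $1,\dots,j-1$ (with maximum $j-1=\sigma_i$ at position $i$) and $\sigma''$ is an arrangement of $\{j,\dots,n-1\}$; conversely every $\sigma$ of this shape has such an occurrence, and it is the unique occurrence of $p$ in $\sigma$, since $j$ is forced to be the position of $n$ and then $i$ the position of the prefix maximum. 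I would draw a picture in the style of Fig.~\ref{pic-triv-thm-13} to make this layered structure transparent.

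Consequently $p(\sigma)\in\{0,1\}$ for every $\sigma\in K$, so $E(t,u)=P(t)+uD(t)$ with $D(t)=A(t)-P(t)$ the generating function of king permutations containing $p$ exactly once, and it remains to compute $D(t)$ by encoding the king condition at the two sides of the entry $n$ in the decomposition $\sigma=\sigma'\,n\,\sigma''$. If $\sigma''$ is nonempty ($j<n$), then $|n-\sigma_{j-1}|\ge n-(j-1)>1$ holds automatically on the left, so $\sigma'$ may be any nonempty king permutation (generating function $A(t)-1$, with $A(t)$ as in (\ref{gf-kings})), while on the right $|n-\sigma_{j+1}|>1$ just says $\sigma''$ does not begin with its largest element; a king arrangement of a block of $m$ consecutive integers not beginning with its largest element is counted by $B_m$ (apply the complement, a bijection on $K$ sending ``begins with the largest'' to ``begins with the smallest''), so the nonempty $\sigma''$ contribute $B(t)-1$. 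If $\sigma''$ is empty ($j=n$), the only condition is $|n-\sigma_{n-1}|>1$, i.e.\ $\sigma'$ is a king permutation of $\{1,\dots,n-1\}$ not ending with its largest element; these are again counted by $B_m$ (reverse, then complement). The entry $n$ itself contributes a factor $t$, so
\[
D(t)=t\bigl(A(t)-1\bigr)\bigl(B(t)-1\bigr)+t\bigl(B(t)-1\bigr)=t\,A(t)\bigl(B(t)-1\bigr).
\]

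Then $P(t)=A(t)-D(t)=A(t)\bigl(1+t-tB(t)\bigr)$ and $E(t,u)=P(t)+uD(t)=A(t)\bigl(1+t-ut-t(1-u)B(t)\bigr)$; substituting $B(t)=A(t)/(1+t)$ from Lemma~\ref{lem-length-1} yields the two stated closed forms, and the listed initial terms follow by direct expansion (for instance $P_5=A_5-D_5=14-2=12$, matching the coefficient $(12+2u)$ of $t^5$).

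The only genuine obstacle is the first step: reading all six shaded boxes correctly and carrying out the counting argument that pins down $\sigma_j=n$ and $j=\sigma_i+1$. Once the decomposition $\sigma=\sigma'\,n\,\sigma''$ is established, everything else is the bookkeeping of Lemma~\ref{lem-length-1} together with the reverse/complement symmetries of $K$, all of which is routine.
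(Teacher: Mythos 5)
Your proposal is correct and follows essentially the same route as the paper: you identify that any occurrence forces $\sigma=\sigma'\,n\,\sigma''$ with $\sigma'$ a permutation of an initial segment, note that such an occurrence is unique, and split on whether $\sigma''$ is empty, obtaining $t\bigl(B(t)-1\bigr)+t\bigl(A(t)-1\bigr)\bigl(B(t)-1\bigr)=tA(t)\bigl(B(t)-1\bigr)$, exactly the paper's count of permutations containing $p$. Your opening analysis of the shaded boxes is more explicit than the paper's figure-based argument, but the decomposition, the case split, and the final formulas coincide.
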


\begin{figure}
\begin{center}
	\begin{tikzpicture}[scale=0.7, baseline=(current bounding box.center)]
	\foreach \x/\y in {0/1,0/2,1/1,1/2,2/0,2/2}		    
		\fill[gray!20] (\x,\y) rectangle +(1,1);
	\draw (0.01,0.01) grid (2+0.99,2+0.99);
	\filldraw (1,1) circle (3pt) node[above left] {$a$};
	\filldraw (2,2) circle (3pt) node[above left] {$b$};
	\node  at (1.5,0.5) {$A$};
	\node  at (2.5,1.5) {$B$};
	\end{tikzpicture}
\end{center}
\caption{Related to the proof of Theorem~\ref{triv-thm-19}}\label{pic-triv-thm-19}
\end{figure}
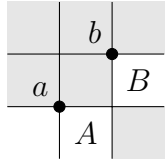

\begin{proof}
Suppose $\sigma \in K$ contains one occurrence of $p$, say $ab$, which is presented schematically in Fig.~\ref{pic-triv-thm-19}. There are two cases to consider here. 
\begin{itemize}
\item When $B$ is empty, then $A$ must be non-empty (or else, $b=a+1$ and $\sigma\not\in K$).  But then to the left of $b$ in $\sigma$ we must have a  non-empty permutation $\sigma' \in K^{\ell}$ (of length at least 2). Such king permutations are counted by $t\big(B(t)-1\big)$, where $t$ is contribution of $b$.

\item When $B$ is non-empty, then the permutation $\sigma''$ to the right of $b$ does not begin with the largest element, which is counted by $B(t)-1$. To the left of $b$ in $\sigma$ we must have a non-empty permutation $\sigma' \in K$, which is counted by $A(t)-1$. Hence, such king permutations are counted by $t\big(A(t)-1\big)\big(B(t)-1\big)$.
\end{itemize}

Now, to get avoidance, we take all king permutations and subtract those that contain the pattern $p$:
$$P(t)=A(t)-t\big(B(t)-1\big)-t\big(A(t)-1\big)\big(B(t)-1\big).$$
By Lemma~\ref{lem-length-1}, we get the formula for $P(t)$.

Similarly, we can get
\begin{equation}\label{formula-thm19-E}E(t,u)=P(t)+ut\big(B(t)-1\big)+ut\big(A(t)-1\big)\big(B(t)-1\big).\end{equation} 
Indeed, any $\sigma \in K$ can contain at most one occurrence of $p$. By solving \eqref{formula-thm19-E}, we get the formula for $E(t,u)$.
\end{proof}

\begin{thm}\label{triv-thm-20}
Let $p = \pattern{scale = 0.5}{2}{1/1,2/2}{0/0,0/1,0/2,1/1,1/2,2/0,2/1}$, $E(t,u)=\sum_{n\geq 0}t^n\sum_{\sigma\in K_n}u^{p(\sigma)}$, $F(t,u)=\sum_{n\geq 0}t^n\sum_{\sigma\in K_{n}^{s}}u^{p(\sigma)}$,
and $P(t)$ be the generating function for $K(p)$. Then, $$P(t)=\left(1+\frac{t^2}{1+t}-\frac{t^2A(t)}{(1+t)^2}\right)A(t),$$ $$E(t,u)=\left(1+\frac{t^2(1-u)}{1+t}-\frac{t^2(1-u)A(t)}{(1+t)^2} \right)A(t),$$
where $A(t)$ is given by (\ref{gf-kings}). The initial terms in the expansion of  $E(t,u)$ are
\begin{footnotesize}
$$1+t+2t^4+14t^5+(88+2u)t^6+(634+12u)t^7+(5164+78u)t^8+\cdots.$$
\end{footnotesize}
\end{thm}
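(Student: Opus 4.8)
The plan is to first decode exactly what an occurrence of $p=\pattern{scale=0.5}{2}{1/1,2/2}{0/0,0/1,0/2,1/1,1/2,2/0,2/1}$ looks like inside a king permutation $\sigma\in K_n$. Since the whole left column of the mesh is shaded, the first dot of any occurrence must be in position $1$; put $m=\sigma_1$. The top two cells of the middle column being shaded says that every entry strictly between the two dots is smaller than $m$, and the bottom two cells of the right column being shaded says that every entry after the second dot exceeds the value of the second dot. If the second dot is in position $j$ with value $v$, then the entries smaller than $m$ can only occupy positions $2,\dots,j-1$, which forces $j-2=m-1$, i.e.\ $j=m+1$; and then the entries with value at most $v$ are exactly those in positions $1,\dots,j$, which forces $v=j=m+1$. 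Hence an occurrence of $p$ exists only when $\sigma_1=m$, $\sigma_{m+1}=m+1$, the block in positions $2,\dots,m$ is a rearrangement of $\{1,\dots,m-1\}$, and the block in positions $m+2,\dots,n$ is a rearrangement of $\{m+2,\dots,n\}$. In particular $m=\sigma_1$ determines the unique candidate occurrence, so $p$ occurs at most once in any permutation, and it suffices to compute the generating function $Q(t)$ for king permutations that contain $p$: then $P(t)=A(t)-Q(t)$ and $E(t,u)=A(t)-(1-u)Q(t)$.

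To compute $Q(t)$, write such a permutation as $\sigma=m\,\pi\,(m+1)\,\rho$ with $m\ge2$, where $\pi$ is the (already reduced) block of length $m-1$ on $\{1,\dots,m-1\}$ and $\rho$ is the block of length $n-m-1$ on $\{m+2,\dots,n\}$. The king condition needs to be checked only at the three adjacencies involving the two singled-out entries: the jump from the last entry of $\pi$ to $m+1$ is automatically at least $2$; the jump from $m$ to $\pi_1$ forces $\pi$ to be a king permutation not beginning with its largest entry; and the jump from $m+1$ to $\rho_1$ forces $\red(\rho)\in K^s$ (with $\rho$ empty allowed when $n=m+1$). The bookkeeping point is that, via the complement map, the number of king $\ell$-permutations not beginning with their largest entry equals the number not beginning with their smallest, namely $B_\ell$; so the $\pi$-block contributes $\sum_{\ell\ge1}B_\ell t^\ell=B(t)-1$ (it is nonempty since $m\ge2$), the $\rho$-block contributes $B(t)$, and the two fixed entries contribute $t^2$. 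Thus $Q(t)=t^2\bigl(B(t)-1\bigr)B(t)$.

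Finally, substituting $B(t)=A(t)/(1+t)$ from Lemma~\ref{lem-length-1} into $P(t)=A(t)-t^2(B(t)-1)B(t)$ and $E(t,u)=A(t)-(1-u)t^2(B(t)-1)B(t)$ and simplifying gives the two displayed closed forms (and one checks the listed initial terms along the way). The same argument restricted to $K^s$---noting that any king permutation containing $p$ automatically has $\sigma_1=m\ge2$, hence lies in $K^s$---gives $F(t,u)=B(t)-(1-u)t^2(B(t)-1)B(t)$. I do not expect a genuine obstacle: the only places demanding care are the decoding of the shading (one must verify the constraints pin the occurrence down completely, so that the statistic is $0/1$-valued) and the handling of the king condition at the junctions $m\mid\pi_1$ and $(m+1)\mid\rho_1$, which is exactly where the "does not begin with an extreme element" conditions---and the two factors of $B(t)$---come from.
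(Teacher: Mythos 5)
Your proposal is correct and follows essentially the same route as the paper: both arguments hinge on the observation that $p$ occurs at most once (forcing the shape $\sigma = m\,\pi\,(m+1)\,\rho$), and both count the containing permutations by a product of two factors equivalent to $t^2\bigl(B(t)-1\bigr)B(t)$ — the paper phrases these as king permutations beginning with their largest (resp.\ smallest) element, counted by $A(t)-B(t)-t$ and $A(t)-B(t)$, which equal your $t(B(t)-1)$ and $tB(t)$ via $A(t)=(1+t)B(t)$. Your write-up is somewhat more careful about decoding the shading and the king conditions at the junctions, and it additionally supplies a formula for $F(t,u)$, which the theorem defines but never uses.
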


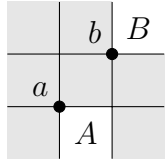
\begin{figure}
\begin{center}
	\begin{tikzpicture}[scale=0.7, baseline=(current bounding box.center)]
	\foreach \x/\y in {0/0,0/1,0/2,1/1,1/2,2/0,2/1}		    
		\fill[gray!20] (\x,\y) rectangle +(1,1);
	\draw (0.01,0.01) grid (2+0.99,2+0.99);
	\filldraw (1,1) circle (3pt) node[above left] {$a$};
	\filldraw (2,2) circle (3pt) node[above left] {$b$};
	\node  at (1.5,0.5) {$A$};
	\node  at (2.5,2.5) {$B$};
	\end{tikzpicture}
\end{center}
\caption{Related to the proof of Theorem~\ref{triv-thm-20}}\label{pic-triv-thm-20}
\end{figure}

\begin{proof}
Let us first consider king permutations containing one occurrence of $p$, say $ab$, depicted in Fig.~\ref{pic-triv-thm-20}. Clearly, $A$ is non-empty and the king permutation formed by $A$ together with $a$ is beginning with its largest element, which is counted by $A(t)-B(t)-t$. The permutation formed by $B$ together with $b$ is a permutation beginning with its smallest element, which is counted by $A(t)-B(t)$. 

For avoidance, we take all permutations and subtract those that contain $p$. Hence,
$$P(t)=A(t)-\big(A(t)-B(t)-t\big)\big(A(t)-B(t)\big).$$
By Lemma~\ref{lem-length-1}, we get the formula for $P(t)$.

Similarly, we can get
\begin{equation}\label{formula-thm20-E}E(t,u)=P(t)+u\big(A(t)-B(t)-t\big)\big(A(t)-B(t)\big).\end{equation} 
Indeed, any $\sigma \in K$ can contain at most one occurrence of $p$. By solving \eqref{formula-thm20-E},
we get the formula for $E(t,u)$.
\end{proof}

\begin{thm}\label{triv-thm-22} 
Let $p = \pattern{scale = 0.5}{2}{1/1,2/2}{0/1,1/2,0/0,2/0,2/2,1/1}$, $E(t,u)=\sum_{n\geq 0}t^n\sum_{\sigma\in K_n}u^{p(\sigma)}$,
and $P(t)$ be the generating function for $K(p)$. Then, $$P(t)=\left(1+t^2-\frac{t^2A^2(t)}{(1+t)^2}\right)A(t),$$ $$E(t,u)=\left(1+t^2(1-u)\big(1-\frac{A^2(t)}{(1+t)^2}\big)\right)A(t),$$
where $A(t)$ is given by (\ref{gf-kings}). The initial terms in the expansion of  $E(t,u)$ are
\begin{footnotesize}
$$1+t+2t^4+14t^5+(86+4u)t^6+(618+28u)t^7+(5062+180u)t^8+\cdots.$$
\end{footnotesize}
\end{thm}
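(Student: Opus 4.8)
The plan is to read off the shading of $p$ to determine the exact global shape of any king permutation containing an occurrence of $p$, then count such permutations; the two generating-function identities will then drop out of a single linear-in-$u$ bookkeeping relation.

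Suppose $\sigma\in K_n$ has an occurrence of $p$ formed by $\sigma_i=a$ and $\sigma_j=b$ with $i<j$ and $a<b$. The three \emph{unshaded} boxes of $p$ are $(0,2)$, $(1,0)$ and $(2,1)$, so every entry of $\sigma$ other than $a$ and $b$ must lie in one of them: the entries to the left of position $i$ all have value $>b$, those strictly between positions $i$ and $j$ all have value $<a$, and those to the right of position $j$ all have value strictly between $a$ and $b$. Conversely, since the remaining six boxes are shaded, \emph{all} entries of value $>b$ lie to the left of $i$, all of value $<a$ lie between $i$ and $j$, and all of value in $(a,b)$ lie to the right of $j$. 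Hence $\sigma$ decomposes as
$$\sigma=\alpha\, a\, \beta\, b\, \gamma,$$
where $\alpha$ is an arrangement of $\{b+1,\dots,n\}$, $\beta$ of $\{1,\dots,a-1\}$, and $\gamma$ of $\{a+1,\dots,b-1\}$; in particular $i=n-b+1$ and $j=n-b+a+1$, so the positions and the extreme values of an occurrence determine one another. I would then argue that $\sigma$ contains \emph{at most one} occurrence of $p$: if there were a second one with extreme values $a'<b'$ and, say, $b<b'$, then $b'$ is a value $>b$, hence sits among the positions of the block $\alpha$ of the first decomposition, which forces $j'\le n-b$ and therefore $i'<j'\le n-b$, so $a'$ also sits in $\alpha$ and $a'\ge b+1$; but then the nonempty block $\{1,\dots,a'-1\}$ of the second decomposition, which contains the value $1$, would occupy positions lying among those of $\alpha$, whose values form $\{b+1,\dots,n\}$ — a contradiction. (The case $b=b'$ makes the two occurrences coincide.)

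Next I would convert ``$\alpha\, a\, \beta\, b\, \gamma$ is a king permutation'' into independent conditions on the three blocks. Since the last entry of $\alpha$ is $\ge b+1$ and the last entry of $\beta$ is $\le a-1$ while $a<b$, the junctions $\alpha a$ and $\beta b$ are automatically valid, so $\alpha$ only has to be a value-shift of a king permutation, contributing $A_{n-b}$. The junction $a\beta$ fails exactly when $\beta$ begins with $a-1$, so $\beta$ must be a king permutation of $\{1,\dots,a-1\}$ not beginning with its largest element; by taking complements this is counted by $B_{a-1}$. Likewise $\gamma$ must be a king permutation of $\{a+1,\dots,b-1\}$ not beginning with its largest element, counted by $B_{b-a-1}$. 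The only junction not yet accounted for is $ab$ when $\beta$ is empty, and this fails precisely in the single exceptional case $a=1,b=2$ (then $\sigma=\alpha\,1\,2$), which must be discarded. Writing $i'=a-1$, $j'=b-a-1$, $k'=n-b$, the number of king $n$-permutations containing (the unique) occurrence of $p$ is therefore
$$\sum_{i'+j'+k'=n-2}B_{i'}B_{j'}A_{k'}\;-\;A_{n-2},$$
the correction $-A_{n-2}$ removing the $(i',j')=(0,0)$ term, which is exactly the exceptional family. Its generating function is $t^2\big(B(t)^2-1\big)A(t)$.

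Finally, since every king permutation has at most one occurrence of $p$, we obtain $E(t,u)=P(t)+u\,t^2\big(B(t)^2-1\big)A(t)$; evaluating at $u=1$ gives $E(t,1)=A(t)$, hence $P(t)=A(t)-t^2\big(B(t)^2-1\big)A(t)$ and $E(t,u)=A(t)-(1-u)\,t^2\big(B(t)^2-1\big)A(t)$, and substituting $B(t)=A(t)/(1+t)$ from \eqref{B(x)} produces the stated formulas (using $B(t)^2=A^2(t)/(1+t)^2$); a quick comparison of low-order terms against \eqref{gf-kings} and Lemma~\ref{lem-length-1} serves as a check. I expect the generating-function manipulations to be routine; the real obstacle is the structural part — reading all six shaded boxes correctly, establishing the ``at most one occurrence'' claim, and then carefully handling the empty-block junctions together with the single exceptional triple $a=1,b=2$, which is precisely what produces the extra ``$+t^2$'' (equivalently the ``$-1$'' inside $B(t)^2-1$) in the answer.
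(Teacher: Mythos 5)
Your proof is correct and follows essentially the same route as the paper: the same three-block decomposition $\alpha\,a\,\beta\,b\,\gamma$ forced by the unshaded boxes, the same at-most-one-occurrence observation, and the same reduction to $A(t)$ and $B(t)=A(t)/(1+t)$. The only difference is bookkeeping --- the paper attaches $a$ to $\beta$ and $b$ to $\gamma$ and splits into three cases according to which blocks are empty, while you count $\beta$ and $\gamma$ directly by $B(t)$ and subtract the single bad case $a=1$, $b=2$; the two are algebraically identical since $2t\big(A(t)-B(t)-t\big)+\big(A(t)-B(t)-t\big)^2=t^2\big(B(t)^2-1\big)$.
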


\begin{figure}
\begin{center}
	\begin{tikzpicture}[scale=0.7, baseline=(current bounding box.center)]
	\foreach \x/\y in {0/1,1/2,0/0,2/0,2/2,1/1}		    
		\fill[gray!20] (\x,\y) rectangle +(1,1);
	\draw (0.01,0.01) grid (2+0.99,2+0.99);
	\filldraw (1,1) circle (3pt) node[above left] {$a$};
	\filldraw (2,2) circle (3pt) node[above left] {$b$};
	\node  at (0.5,2.5) {$A$};
	\node  at (1.5,0.5) {$B$};
	\node  at (2.5,1.5) {$C$};
	\end{tikzpicture}
\end{center}
\caption{Related to the proof of Theorem~\ref{triv-thm-22}}\label{pic-triv-thm-22}
\end{figure}
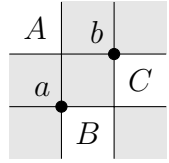

\begin{proof} 
Suppose that a permutation $\sigma\in K$ contains an occurrence $ab$ of the pattern $p$, that is presented schematically in Fig.~\ref{pic-triv-thm-22}.  Clearly,  $B$ and $C$ can not be empty at the same time, and $A$ is counted by $A(t)$. The following three cases are possible.
\begin{itemize}
\item Suppose that $B$ is non-empty and $C$ is empty. The permutation formed by $B$ together with $a$
is beginning with its largest element, which is counted by $A(t)-B(t)-t$. Hence, the king permutations in this case are counted by $t\big(A(t)-B(t)-t\big)A(t)$.

\item Suppose that $B$ is empty and $C$ is non-empty. The permutation formed by $C$ and $b$
begins with its largest element, which is counted by $A(t)-B(t)-t$. Thus, king permutations in this case are counted by $t\big(A(t)-B(t)-t\big)A(t)$.

\item Finally, suppose that $B$ is non-empty and $C$ is non-empty. Both the permutation formed by $B$ together with $a$ and the permutation formed by $C$ together with $b$ begin with their largest element and are of length at least 2. Therefore, king permutations in this case are counted by $\big(A(t)-B(t)-t\big)^2A(t)$.
\end{itemize}

As for avoidance, we clearly have
$$P(t)=A(t)-2t\big(A(t)-B(t)-t\big)A(t)-\big(A(t)-B(t)-t\big)^2A(t),$$
and by Lemma~\ref{lem-length-1}, we get the formula for $P(t)$.

Similarly, we can derive
\begin{equation}\label{formula-thm22-E}E(t,u)=P(t)+2ut\big(A(t)-B(t)-t\big)A(t)+u\big(A(t)-B(t)-t\big)^2A(t).\end{equation} 
Indeed, any $\sigma \in K$ can contain at most one occurrence of $p$. By solving \eqref{formula-thm22-E},
we get the formula for $E(t,u)$.
\end{proof}

\section{The other distributions}\label{other-distr-sec}

In this section, we derive the remaining distributions in this paper. 

\subsection{Distribution of the pattern Nr.\ 16}  

Our next theorem establishes the avoidance and  distribution of the pattern Nr.\ 16 = $\pattern{scale = 0.5}{2}{1/1,2/2}{0/1,2/0,1/0,0/2}$.

\begin{thm}\label{thm-pat-16} 
Let $p = \pattern{scale = 0.5}{2}{1/1,2/2}{0/1,2/0,1/0,0/2}$, $E(t,u)=\sum_{n\geq 0}t^n\sum_{\sigma\in K_n}u^{p(\sigma)}$, 
and $P(t)$ be the generating function for $K(p)$. Then,
\begin{footnotesize}
$$P(t)=\frac{(1+t)^2A(t)}{1+t+tA(t)},$$ 
and $E(t,u)$ is
\begin{equation}\label{thm-pat-16-relation-E}
E(t,u)=\sum_{i\geq 0}u^{i\choose 2}t^i(1+u^it)\prod_{j=0}^{i}\frac{(1+u^jt)A(u^jt)}{1+u^jt+u^jtA(u^jt)}\prod_{k=1}^{i}\frac{-1-u^kt+A(u^kt)}{(1+u^kt)A(u^kt)},
\end{equation}
\end{footnotesize}
where $A(t)$ is given by (\ref{gf-kings}). The initial terms in the expansion of  $E(t,u)$ are
\begin{footnotesize}
$$1+t+2t^4+(12+2u^4)t^5+(78+12u^5)t^6+(568+78u^6)t^7+(4674+568u^7)t^8+\cdots.$$
\end{footnotesize}
\end{thm}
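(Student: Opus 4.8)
The plan is to determine exactly which pairs of entries of a king permutation form an occurrence of $p=$ Nr.~$16=\pattern{scale=0.5}{2}{1/1,2/2}{0/1,2/0,1/0,0/2}$, and then to factor a king permutation at its strong fixed points.

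\emph{Occurrences.} Write $p=(\tau,R)$ with $\tau=12$ and $R=\{\boks{0}{1},\boks{0}{2},\boks{1}{0},\boks{2}{0}\}$. In an occurrence of $p$ formed by entries $\sigma_a<\sigma_b$ in positions $a<b$, the shaded boxes $\boks{0}{1},\boks{0}{2}$ forbid any entry of value $>\sigma_a$ in a position $<a$ (the entry $\sigma_b$ lies in position $b>a$, hence does not matter here), while $\boks{1}{0},\boks{2}{0}$ forbid any entry of value $<\sigma_a$ in a position $>a$. Thus positions $1,\dots,a-1$ carry precisely the values $1,\dots,\sigma_a-1$, which forces $\sigma_a=a$ and $\{\sigma_1,\dots,\sigma_a\}=\{1,\dots,a\}$; that is, $a$ is a strong fixed point of $\sigma$ (an occurrence of $\pattern{scale=0.5}{1}{1/1}{0/1,1/0}$). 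Conversely, any strong fixed point $a<n$ paired with any $b$ in $\{a+1,\dots,n\}$ is an occurrence (the remaining columns of $R$ contribute nothing and $\sigma_b>a$ automatically). Therefore $p(\sigma)=\sum_a(n-a)$, the sum over strong fixed points $a<n$. In particular $\sigma$ avoids $p$ exactly when its only strong fixed point, if any, is at position $n$: either $\sigma$ avoids $\pattern{scale=0.5}{1}{1/1}{0/1,1/0}$ — generating function $S(t):=A(t,0)=\frac{(1+t)A(t)}{1+t+tA(t)}$ by Theorem~\ref{thm-length-1} and \eqref{avd-begin-smallest-dist} — or $\sigma=\gamma\,n$ with $\gamma$ a $\pattern{scale=0.5}{1}{1/1}{0/1,1/0}$-avoiding king permutation of $\{1,\dots,n-1\}$ (which automatically does not end in its largest entry), giving $P(t)=S(t)+tS(t)=\frac{(1+t)^2A(t)}{1+t+tA(t)}$.

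\emph{Two recurrences.} I would introduce $E^{s}(t,u):=\sum_{n\ge0}t^n\sum_{\sigma\in K_n^s}u^{p(\sigma)}$ and split a king permutation having at least one strong fixed point before position $n$ at the leftmost such position $a$: $\sigma=\beta\,\langle a\rangle\,\rho$, where $\beta$ on values $1,\dots,a-1$ is a $\pattern{scale=0.5}{1}{1/1}{0/1,1/0}$-avoiding king permutation (possibly empty; by \eqref{4-equations} the junction with $\langle a\rangle$ imposes no extra condition) and $\red(\rho)$ on values $a+1,\dots,n$ is an arbitrary non-empty king permutation in $K^s$ (the junction forces $\sigma_{a+1}\ge a+2$). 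Since $a$ contributes $n-a=|\rho|$ to $p(\sigma)$ and the remaining occurrences are exactly those of $\red(\rho)$, each letter of $\rho$ carries an extra $u$, so $\rho$ contributes $E^{s}(ut,u)-1$; adding the avoidance term $P(t)=(1+t)S(t)$ yields
\[
E(t,u)=(1+t)S(t)+tS(t)\bigl(E^{s}(ut,u)-1\bigr)=S(t)+tS(t)E^{s}(ut,u).
\]
Running the same split inside $K^s$ — where $\sigma_1\neq1$ forces $a\ge2$, so $\beta$ is non-empty with series $S(t)-1$, and the avoidance term becomes $(1+t)S(t)-t$ — yields
\[
E^{s}(t,u)=\bigl((1+t)S(t)-t\bigr)+t\bigl(S(t)-1\bigr)\bigl(E^{s}(ut,u)-1\bigr)=S(t)+t\bigl(S(t)-1\bigr)E^{s}(ut,u).
\]

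\emph{Solving.} Eliminating $E^{s}(ut,u)$ between the two identities gives $E^{s}(t,u)=1+g(t)E(t,u)$ with $g(t):=\frac{A(t)-1-t}{(1+t)A(t)}$ (so that $S(t)g(t)=S(t)-1$ and $g(0)=0$), hence the functional equation
\[
E(t,u)=(1+t)S(t)+tS(t)\,g(ut)\,E(ut,u).
\]
Because $g$ has zero constant term, iterating this equation converges in $\mathbb{Q}[u][[t]]$, and the term produced at stage $i$ is $u^{\binom i2}t^i(1+u^it)\prod_{j=0}^{i}S(u^jt)\prod_{k=1}^{i}g(u^kt)$; summing over $i\ge0$ and substituting the closed forms for $S$ and $g$ gives \eqref{thm-pat-16-relation-E}. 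Setting $u=0$ (where $g(ut)\to0$) recovers $P(t)$, and the displayed initial terms follow by expanding via \eqref{gf-kings}. The delicate part is the middle step: one must check the king condition at both junctions of $\beta\,\langle a\rangle\,\rho$ carefully enough to be sure that $\beta$ ranges over \emph{all} $\pattern{scale=0.5}{1}{1/1}{0/1,1/0}$-avoiders (invoking \eqref{4-equations} to discard the apparent $K^s$/$K^\ell$ side conditions) and $\red(\rho)$ over \emph{all} non-empty members of $K^s$, keep track of why the prefactor is $1+t$ rather than $1$ (a king permutation may avoid $p$ yet end in its largest entry) and of the exceptional one-letter permutation, and apply the substitution $t\mapsto ut$ — which encodes the $n-a$ occurrences sitting at the strong fixed point $a$ — consistently in both recurrences.
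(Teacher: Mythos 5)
Your proposal is correct and takes essentially the same route as the paper: both identify occurrences of $p$ with pairs (strong fixed point, later entry), split at the leftmost strong fixed point with an $S(t)$-counted prefix, and arrive at the identical functional equation $E(t,u)=(1+t)S(t)+tS(t)\,g(ut)\,E(ut,u)$ to iterate, your auxiliary series $E^{s}$ over $K^s$ being just a reindexing of the paper's $E^*$ over permutations beginning with the smallest element (namely $E^{*}(t,u)=tE^{s}(ut,u)$). The only cosmetic difference is that you detach the strong fixed point $a$ from the suffix before reducing, whereas the paper keeps it attached.
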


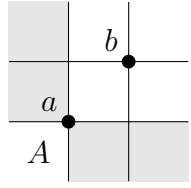
\begin{figure}
\begin{center}
	\begin{tikzpicture}[scale=0.8, baseline=(current bounding box.center)]
	\foreach \x/\y in {0/1,0/2,1/0,2/0}		    
		\fill[gray!20] (\x,\y) rectangle +(1,1);
	\draw (0.01,0.01) grid (2+0.99,2+0.99);
	\filldraw (1,1) circle (3pt) node[above left] {$a$};
	\filldraw (2,2) circle (3pt) node[above left] {$b$};
	\node  at (0.5,0.5) {$A$};
	\end{tikzpicture}
\end{center}
\caption{Related to the proof of Theorem~\ref{thm-pat-16}}\label{pic-thm-pat-16}
\end{figure}

\begin{proof} 
Suppose a king permutation contains at least one occurrence of $p$, say $ab$, which is presented schematically in Fig.~\ref{pic-thm-pat-16}. Assume that $a$ is the leftmost possible among all occurrences of $p$. Clearly,  $A$ must be \pattern{scale=0.5}{1}{1/1}{0/1,1/0}-avoiding, and the number of possibilities to choose $A$ is then given by $\frac{(1+t)A(t)}{1+t+tA(t)}$ by Theorem~\ref{thm-length-1}.
On the other hand, to the right of $a$ in $\sigma$ we can have any non-empty permutation $\sigma'$, $\red(\sigma')\in K^s$,  formed by the largest elements of $\sigma$ and counted by $B(t)-1$. Hence, such king permutations are counted by $t\big(B(t)-1\big)\frac{(1+t)A(t)}{1+t+tA(t)}$. By definition of $P(t)$, we have
$$P(t)=A(t)-t\big(B(t)-1\big)\frac{(1+t)A(t)}{1+t+tA(t)}.$$

Similarly, for $E(t,u)$, we can derive 
\begin{equation}\label{thm-pat-16-relation-E-1}
E(t,u)=P(t)+\big(E^*(t,u)-t\big)\frac{(1+t)A(t)}{1+t+tA(t)},
\end{equation}
where $E^*(t,u)$ records the distribution of $p$ on king permutations beginning with the smallest element. Indeed,  $E^*(t,u)$ is given by the element $a$ and all elements to the right of it, and to distinguish from the avoidance case (given by $P(t)$), to the right of $a$ we must have at least one element (resulting in ``$-t$'' corresponding to a single element $a$). A recursion for $E^*(t,u)$ is 
\begin{equation}\label{thm-pat-16-relation-E-star}
E^*(t,u)=t\big(E(ut,u)-E^*(ut,u)\big),
\end{equation}
because any king permutation beginning with element 1 has the property that each element to the right of 1, together with 1, will form an occurrence of~$p$ (explaining the term $E(ut,u)$), and it cannot have element 2 in the second position (explaining the term $-E^*(ut,u)$); the factor of $t$ corresponds to element 1. From (\ref{thm-pat-16-relation-E-1}), we have
\begin{equation}\label{E*(t,u)-eq}
E^*(t,u)=\frac{1+t+tA(t)}{(1+t)A(t)}E(t,u)-1.
\end{equation}
Using (\ref{E*(t,u)-eq}) in (\ref{thm-pat-16-relation-E-star}), we obtain
$$E(t,u)=\frac{(1+t)A(t)}{1+t+tA(t)}\left(1+t+t\frac{A(ut)-1-ut}{(1+ut)A(ut)}E(ut,u)\right),$$
and iterating this gives us (\ref{thm-pat-16-relation-E}).
\end{proof}

\subsection{Distribution of the pattern Nr.\ 17} 

Our next theorem establishes the avoidance and distribution of the pattern Nr.\ 17 = $\pattern{scale = 0.5}{2}{1/1,2/2}{0/1,1/2,0/0,2/0,1/0,0/2,2/1}$.

\begin{thm}\label{thm-pat-17}
Let $p=\pattern{scale = 0.5}{2}{1/1,2/2}{0/1,1/2,0/0,2/0,1/0,0/2,2/1}$, $E(t,u)=\sum_{n\geq 0}t^n\sum_{\sigma\in K_n}u^{p(\sigma)}$, 
and $P(t)$ be the generating function for $K(p)$. Then,
$$P(t)=\left(\frac{1}{1+t}+\frac{t(1+t)}{1+t+tA(t)}\right)A(t),$$ $$E(t,u)=\left(\frac{1}{1+t}+\frac{t(1+t)}{1+t\big(1+u+ut+(1-u)A(t)\big)} \right)A(t),$$
where $A(t)$ is given by (\ref{gf-kings}). The initial terms in the expansion of  $E(t,u)$ are
\begin{footnotesize}
$$1+t+2t^4+14t^5+(88+2u)t^6+(636+10u)t^7+(5174+68u)t^8+\cdots.$$
\end{footnotesize}
\end{thm}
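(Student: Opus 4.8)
The plan is to recognize occurrences of $p$ as the ``non‑initial'' strong fixed points of king permutations that begin with $1$, and then to express $E(t,u)$ through the series $B(t)$ and $B(t,u)$ already computed in Lemma~\ref{lem-length-1} and Theorem~\ref{thm-length-1}. The underlying classical pattern of $p$ is $\tau=12$, and the shaded set is $R=\{\boks{0}{0},\boks{0}{1},\boks{0}{2},\boks{1}{0},\boks{1}{2},\boks{2}{0},\boks{2}{1}\}$.

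\emph{Step 1 (structure of occurrences).} I would describe an occurrence of $p$ in $\sigma$ as a pair of positions $i_1<i_2$ with $\sigma_{i_1}<\sigma_{i_2}$ meeting the shading constraints. Since $\boks{0}{0},\boks{0}{1},\boks{0}{2}\in R$, there is no element in a position smaller than $i_1$, so $i_1=1$. Since $\boks{1}{0},\boks{1}{2}\in R$, every element in positions $2,\dots,i_2-1$ has value strictly between $\sigma_1$ and $\sigma_{i_2}$, and since $\boks{2}{0},\boks{2}{1}\in R$, every element in positions $i_2+1,\dots,n$ has value exceeding $\sigma_{i_2}$. Hence no value lies below $\sigma_1$, forcing $\sigma_1=1$; and since the $\sigma_{i_2}-2$ values strictly between $1$ and $\sigma_{i_2}$ must occupy exactly the $i_2-2$ positions $2,\dots,i_2-1$, we get $\sigma_{i_2}=i_2$ and $\{\sigma_1,\dots,\sigma_{i_2}\}=\{1,\dots,i_2\}$, i.e.\ $i_2$ is a strong fixed point of $\sigma$. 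The converse is immediate: if $\sigma_1=1$ and $b\ge 2$ is a strong fixed point, then $(1,b)$ is an occurrence of $p$. Therefore $p(\sigma)=0$ whenever $\sigma_1\ne 1$, while if $\sigma_1=1$ then $p(\sigma)$ counts the strong fixed points of $\sigma$ in positions $\ge 2$, which is one fewer than the total number of strong fixed points of $\sigma$ (the one at position $1$ being excluded).

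\emph{Step 2 (generating function).} I would split $E(t,u)$ according to whether $\sigma_1=1$. The king permutations with $\sigma_1\ne 1$, together with the empty one, are exactly those in $K^s$, so they are enumerated by $B(t)$ and each contributes $u^0$ by Step~1. For $\sigma=1\sigma_2\cdots\sigma_n$, kingness gives $\sigma_2\ge 3$, hence $\red(\sigma_2\cdots\sigma_n)\in K^s$; moreover $b\ge 2$ is a strong fixed point of $\sigma$ iff $b-1$ is a strong fixed point of $\red(\sigma_2\cdots\sigma_n)$, so $\sigma$ has exactly one more strong fixed point than $\red(\sigma_2\cdots\sigma_n)$. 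Combining this with $p(\sigma)=(\text{number of strong fixed points of }\sigma)-1$ from Step~1, and recalling that $B(t,u)$ of Theorem~\ref{thm-length-1} is the generating function for $K^s$ by number of strong fixed points, I obtain
\[
\sum_{n\ge 0}t^n\sum_{\substack{\sigma\in K_n\\ \sigma_1=1}}u^{p(\sigma)}
\;=\; t\sum_{n\ge 0}t^n\sum_{\pi\in K^s_n}u^{\pattern{scale=0.5}{1}{1/1}{0/1,1/0}(\pi)}
\;=\; t\,B(t,u),
\]
the exponent shift being legitimate because every permutation beginning with $1$ has at least one strong fixed point. Hence $E(t,u)=B(t)+t\,B(t,u)$.

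\emph{Step 3 (substitution).} Finally I would substitute $B(t)=\dfrac{A(t)}{1+t}$ from \eqref{B(x)} and $B(t,u)=\dfrac{(1+t)A(t)}{1+t(1+u+ut)+t(1-u)A(t)}$ from \eqref{not-begin-end-extreme}, factor out $A(t)$, and rewrite the denominator as $1+t\bigl(1+u+ut+(1-u)A(t)\bigr)$ to reach the stated closed form of $E(t,u)$; specializing to $u=0$, with $B(t,0)$ as in \eqref{avd-begin-smallest-dist}, gives $P(t)=E(t,0)$. The listed initial terms follow by expansion. The one genuinely delicate step is the region analysis of Step~1 --- above all the point that an occurrence forces $\sigma_1=1$, so that the strong fixed points at positions $\ge 2$ which a king permutation not beginning with $1$ may still possess do not create occurrences of $p$; once this is in hand, the rest is routine generating‑function bookkeeping.
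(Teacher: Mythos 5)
Your proposal is correct and follows essentially the same route as the paper: both arguments split king permutations according to whether they begin with $1$, identify the occurrences of $p$ in $1\sigma_2\cdots\sigma_n$ with the strong fixed points of $\red(\sigma_2\cdots\sigma_n)\in K^s$, and arrive at $E(t,u)=B(t)+tB(t,u)$ before substituting \eqref{B(x)} and \eqref{not-begin-end-extreme}. Your Step~1 merely spells out (in both directions) the correspondence between occurrences of $p$ and strong fixed points that the paper asserts more briefly.
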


\begin{proof} 
Clearly, any occurrence of $p$ must begin with the element 1 placed in the first position in a king permutation. If $\sigma \in K(p)$ does not begin with 1, then such permutations are counted by $B(t)$. The remaining avoidance case is that $\sigma \in K(p)$ begins with 1, and the permutation to the right of 1 avoids the pattern $\pattern{scale=0.5}{1}{1/1}{0/1,1/0}$. By Theorem~\ref{thm-length-1}, such permutations are counted by $t(\frac{(1+t)A(t)}{1+t+tA(t)})$. Hence,
$$P(t)=B(t)+\frac{t(1+t)A(t)}{1+t+tA(t)},$$
which gives the formula for $P(t).$

We next discuss $E(t,u)$. King permutations not beginning with the element 1 are counted by $B(t)$ and they do not have occurrences of $p$. Consider a king permutation $\sigma$ beginning with 1. Any occurrence of $\pattern{scale=0.5}{1}{1/1}{0/1,1/0}$ to the right of 1 leads to an occurrence of $p$ in $\sigma$. Hence, permutations in this case are counted by $tB(t,u)$, where the factor of $t$ corresponds to the element 1, and $B(t,u)$ is given in Theorem~\ref{thm-length-1}. Therefore, we have 
$$E(t,u)=B(t)+tB(t,u),$$
which completes our proof. 
\end{proof}

\subsection{Distribution of the pattern Nr.\ 27} 

Our next theorem establishes the avoidance and  distribution of the pattern Nr.\ 27 = $\pattern{scale=0.5}{2}{1/1,2/2}{0/1,2/0,2/2,1/0,1/1,0/2}$.

\begin{thm}\label{thm-pat-27}
Let $p=\pattern{scale=0.5}{2}{1/1,2/2}{0/1,2/0,2/2,1/0,1/1,0/2}$, $E(t,u)=\sum_{n\geq 0}t^n\sum_{\sigma\in K_n}u^{p(\sigma)}$, 
and $P(t)$ be the generating function for $K(p)$. Then,
\begin{footnotesize}
$$P(t)=\left(t+\frac{1}{1+t}-\frac{t^2A^2(t)}{(1+t)\big(1+t+tA(t)\big)}\right)A(t),$$ $$E(t,u)=\left(1+\frac{t^2(1-u)}{1+t}\left(1-\frac{A^2(t)}{1+t\big(1+u+ut+(1-u)A(t)\big)}\right)\right)A(t),$$
\end{footnotesize}
where $A(t)$ is given by (\ref{gf-kings}). The initial terms in the expansion of  $E(t,u)$ are
\begin{footnotesize}
$$1+t+2t^4+14t^5+(86+4u)t^6+(624+20u+2u^2)t^7+(5096+136u+10u^2)t^8+\cdots.$$
\end{footnotesize}  
\end{thm}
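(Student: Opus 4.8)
The pattern Nr.\ 27 is $p=\pattern{scale=0.5}{2}{1/1,2/2}{0/1,2/0,2/2,1/0,1/1,0/2}$, whose underlying permutation is $12$ and whose shaded region is $R=\{\boks{0}{1},\boks{0}{2},\boks{1}{0},\boks{1}{1},\boks{2}{0},\boks{2}{2}\}$.  The plan is to argue, as in the ``trivial'' theorems of Section~\ref{trivial-sec}, that any king permutation $\sigma$ can contain at most one occurrence of $p$, and then to describe precisely which king permutations contain such an occurrence.  Indeed, if $ab$ is an occurrence of $p$ then the shading forces: nothing strictly between columns $0$ and $1$ below $a$ and nothing in column between $a$ and $b$ at height of $a$ (boxes $\boks{1}{0},\boks{1}{1}$), nothing to the left of $a$ either below $a$ or above $b$ (boxes $\boks{0}{1},\boks{0}{2}$), and nothing to the right of $b$ either below $a$ or above $b$ (boxes $\boks{2}{0},\boks{2}{2}$).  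Consequently $a$ and $b$ must be consecutive in position, the elements below the horizontal line through $a$ all lie strictly between $a$ and $b$ in position (so there are none to the left of $a$ or right of $b$), and similarly the elements above $b$.  Because $\sigma$ is a king permutation and $a,b$ are adjacent in position, we must have $b\ge a+2$; writing the block of values in $(a,b)$ that sit between columns, one sees the occurrence is essentially unique and is determined by a splitting of $\sigma$ into three king-type blocks $A$ (the values below $a$, all squeezed between positions of $a$ and $b$), the pair $ab$, and $C$ (the values above $b$, also squeezed between).

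First I would draw the schematic picture (analogous to Figures~\ref{pic-triv-thm-20}--\ref{pic-triv-thm-22}) showing $a$ at $\boks{1}{1}$, $b$ at $\boks{2}{2}$, block $A$ occupying the region below and between, and block $C$ occupying the region above and between, and verify from the six shaded boxes that no element of $\sigma$ may lie outside these regions.  Next I would enumerate the king permutations containing exactly one occurrence: the values $\le a$ together with $a$ form, after reduction, a king permutation \emph{ending with its largest element} $a$ and having nothing to the left that is small — i.e.\ it is a king permutation on the ``$A$ plus $a$'' block that must end with its maximum, which Lemma~\ref{lem-length-1} and Theorem~\ref{thm-length-1} let me count; symmetrically the ``$b$ plus $C$'' block is a king permutation \emph{beginning with its smallest element}; and the king condition $|b-a|>1$ (equivalently that these two blocks are nonempty/interleave correctly) is exactly what removes the avoidance-overlap.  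The bookkeeping should mirror Theorem~\ref{triv-thm-22}: the count of king permutations beginning with their smallest element is $A(t)-B(t)$ and the count beginning (or ending) with the largest element is $A(t)-B(t)-t$, while $B(t)=A(t)/(1+t)$ from Lemma~\ref{lem-length-1}, and the avoidance generating function is obtained by subtracting the ``contains $p$'' contribution from $A(t)$.

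Concretely, I expect to obtain a relation of the shape
\[
P(t)=A(t)-\Big(\text{contribution of permutations containing }p\Big),
\]
and then $E(t,u)=P(t)+u\cdot(\text{that same contribution})$, since each king permutation contains $p$ at most once.  The contribution itself factors as (gf for the left block, a king permutation whose leftmost structure is constrained so that its values below the line are exactly the ones between the two marked positions) times $t^2$ times (gf for the right block), together with the correct interleaving, which is where the denominators $1+t+tA(t)$ (equivalently $P(t)$ from Theorem~\ref{thm-length-1}, equation~\eqref{avd-begin-smallest-dist}) enter: one of the two blocks must itself avoid a strong-fixed-point-type configuration because of boxes $\boks{1}{0},\boks{1}{1}$ forcing no element of $A$ to sit immediately after $a$ at low height unless it is genuinely smaller.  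Matching this against the claimed closed forms, and then simplifying using $B(t)=A(t)/(1+t)$, should reproduce both displayed formulas; the stated low-order terms $1+t+2t^4+14t^5+(86+4u)t^6+(624+20u+2u^2)t^7+\cdots$ give a concrete check, in particular the appearance of $u^2$ at $t^7$ which tells me that a single king permutation \emph{can} after all contain $p$ twice — so the ``at most once'' claim is \textbf{false} and the real argument must instead set up a recursion tracking the leftmost occurrence, exactly as in the proof of Theorem~\ref{thm-pat-16}.

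Revised plan in light of that $u^2$: the main obstacle is that occurrences of $p$ can chain, so I would instead isolate the \emph{leftmost} occurrence $ab$ of $p$ (with $a$ leftmost), observe that everything strictly to the left of $a$ forms a $\pattern{scale=0.5}{1}{1/1}{0/1,1/0}$-avoiding king permutation on the small values (counted by $P(t)$ from Theorem~\ref{thm-length-1}) while the large values above $b$ may themselves contain further occurrences of $p$, leading to a functional equation of the type $E(t,u)=P(t)+(\text{stuff})\cdot(E(ut,u)-\cdots)$ in the spirit of \eqref{thm-pat-16-relation-E-1}--\eqref{thm-pat-16-relation-E-star}; the hope, and what the relatively clean closed form suggests, is that here the recursion telescopes after a single step rather than requiring the infinite product of Theorem~\ref{thm-pat-16}, because the constraint boxes on the right of $b$ ($\boks{2}{0},\boks{2}{2}$) decouple the right block from generating new \emph{leftmost} occurrences.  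The hard part will be justifying exactly that decoupling — i.e.\ proving that the right block contributes through $A(t)$ (or $B(t,u)$/$C(t,u)$) in a way that closes the equation in finitely many steps — and then carrying out the algebraic simplification to the stated form using \eqref{avd-begin-smallest-dist} and Lemma~\ref{lem-length-1}.
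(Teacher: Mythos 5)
Your final plan (locate the leftmost occurrence $ab$, decompose the permutation into blocks, and feed in the length-1 distributions from Theorem~\ref{thm-length-1}) is the right strategy and is the one the paper follows, but two of your structural claims are wrong, and the step you flag as ``the hard part'' is exactly the idea you are missing. First, the geometry: boxes $\boks{1}{2}$ and $\boks{0}{0}$ are \emph{not} shaded, so $a$ and $b$ need not be adjacent in position. If $ab$ is an occurrence, the values below $a$ all sit to the \emph{left} of $a$ (block $A$), the values above $b$ all sit \emph{between} $a$ and $b$ in position (block $B$), and the values strictly between $a$ and $b$ all sit to the \emph{right} of $b$ (block $C$); see Fig.~\ref{av-pic-pat-27}. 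You correctly retract the ``at most once'' claim after noticing the $u^2$ term, but your revised decomposition then attaches the constraints to the wrong blocks. The block to the left of $a$ carries \emph{no} pattern-avoidance constraint (no occurrence of $p$ can begin inside $A$ once $ab$ is an occurrence); it is merely any member of $K^{\ell}$, counted by $B(t)=A(t)/(1+t)$, the $K^{\ell}$ condition coming from the king restriction because the maximum of $A$ is $a-1$. The avoidance constraint created by taking $b$ leftmost falls on the block $B$ of \emph{large} values, which must avoid $\pattern{scale=0.5}{1}{1/1}{0/0,1/1}$ and hence contributes the factor $(1+t)A(t)/(1+t+tA(t))$ from \eqref{avd-begin-smallest-dist}. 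Using $P(t)$ for the left block, as you propose, would already give the wrong avoidance formula.

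Second, and more substantively: no iterated functional equation of the type \eqref{thm-pat-16-relation-E-star} is needed here, and hoping that such a recursion ``telescopes after one step'' is not an argument. What closes the proof is a bijection: every occurrence of $p$ other than $ab$ has the form $ac$ with $c\in C$, and such pairs correspond exactly to occurrences of $\pattern{scale=0.5}{1}{1/1}{0/0,1/1}$ inside $C$ (elements of $C$ preceded only by larger and followed only by smaller elements of $C$). Since $C$ cannot begin with its largest element $b-1$ (king condition next to $b$), its contribution is precisely the already-known series $B(t,u)$ of \eqref{not-begin-end-extreme}, with no self-reference to $E(t,u)$. One then gets
$$E(t,u)=P(t)+ut^2\cdot\frac{A(t)}{1+t}\cdot\frac{(1+t)A(t)}{1+t+tA(t)}\cdot B(t,u)-ut^2B(t),$$
where the last term removes the case in which $B$ and $C$ are simultaneously empty (forcing $b=a+1$, which is not a king permutation), and the avoidance formula is the $u$-free analogue. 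Your proposal never identifies this correspondence between extra occurrences of $p$ and a length-1 mesh pattern in $C$, and without it you have no mechanism to account for the higher powers of $u$, so the argument does not go through as written.
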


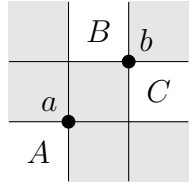
\begin{figure}
\begin{center}
	\begin{tikzpicture}[scale=0.8, baseline=(current bounding box.center)]
	\foreach \x/\y in {0/1,0/2,1/0,1/1,2/0,2/2}		    
	\fill[gray!20] (\x,\y) rectangle +(1,1);
	\draw (0.01,0.01) grid (2+0.99,2+0.99);
	\filldraw (1,1) circle (3pt) node[above left] {$a$};
	\filldraw (2,2) circle (3pt) node[above right] {$b$};
	\node  at (0.5,0.5) {$A$};
	\node  at (1.5,2.5) {$B$};
	\node  at (2.5,1.5) {$C$};
	\end{tikzpicture}
\end{center}
\caption{Related to the proof of Theorem~\ref{thm-pat-27}}\label{av-pic-pat-27}
\end{figure}

\begin{proof}
Suppose that a permutation $\sigma\in K$ contains at least one occurrence of the pattern $p$, and $ab$ in Fig.~\ref{av-pic-pat-27} is an occurrence with the leftmost possible $a$. $B$ and $C$ cannot be empty at the same time, or else $\sigma\not\in K$. 

We know that $A\in K^{\ell}$, which is counted by $B(t)$. We assume that $b$ is the leftmost possible after $a$ is fixed as the leftmost possible, and so $B\in K(\pattern{scale=0.5}{1}{1/1}{0/0,1/1})$ (implying $B$ never ends with the smallest element) and enumeration of possible $B$ is given by (\ref{avd-begin-smallest-dist}), namely, by  $\frac{(1+t)A(t)}{1+t+tA(t)}$. The permutation formed by $C$ does not begin with the largest element, which is counted by $B(t)$. Therefore, the number of permutations in $K(p)$ is
$$P(t)=A(t)-\left(t^2B^2(t)\frac{(1+t)A(t)}{1+t+tA(t)}-t^2B(t)\right),$$
where we subtracted $t^2B(t)$ corresponding to the case of $B$ and $C$ being empty at the same time. This gives the claimed formula for $P(t)$ using Lemma~\ref{lem-length-1}.

As for distribution $E(t,u)$, permutations in $K(p)$ are counted by $P(t)$. On the other hand, if  $\sigma\in K$ contains at least one occurrence of $p$, then using our considerations above, such permutations are enumerated by
\begin{equation}\label{aux-formula-1}
ut^2\cdot \frac{A(t)}{1+t} \cdot \frac{(1+t)A(t)}{1+t+tA(t)} \cdot B(t,u)-ut^2B(t)
\end{equation}
where $A(t)$ and $B(t,u)$ are given by (\ref{gf-kings}) and (\ref{not-begin-end-extreme}), respectively. Indeed, $ab$ contributes the factor of $ut^2$ and there are $\frac{A(t)}{1+t}$ choices for $A$. The number of choices for $B$ is $\frac{(1+t)A(t)}{1+t+tA(t)}$ because $B$ avoids $\pattern{scale=0.5}{1}{1/1}{0/0,1/1}$ and hence never ends with the smallest element. Finally, the choices for $C$ are clearly given by $B(t,u)$, and we need to subtract the cases when both $B$ and $C$ are empty. 

Using the formula (\ref{not-begin-end-extreme}) for $B(t,u)$ in (\ref{aux-formula-1}), we obtain the claimed formula for $E(t,u)$.
\end{proof}

\subsection{Distribution of the pattern Nr.\ 28}

Our next theorem establishes the avoidance and  distribution of the pattern  
Nr.\ 28 = $\pattern{scale=0.5}{2}{1/1,2/2}{0/1,1/2,0/0,2/2,1/0,2/1}$.

\begin{thm}\label{thm-pat-28}
Let $p=\pattern{scale=0.5}{2}{1/1,2/2}{0/1,1/2,0/0,2/2,1/0,2/1}$, $E(t,u)=\sum_{n\geq 0}t^n\sum_{\sigma\in K_n}u^{p(\sigma)}$, 
and $P(t)$ be the generating function for $K(p)$. Then,
$$P(t)=\frac{(1+t)^2A(t)}{(1+t)^2+t^2\big(A(t)-t-1\big)A(t)},$$ $$E(t,u)=\frac{(1+t)^2A(t)}{1+t\big(2+t(1+(1-u)(A(t)-t-1)A(t))\big)},$$
where $A(t)$ is given by (\ref{gf-kings}). The initial terms in the expansion of  $E(t,u)$ are
\begin{footnotesize}
$$1+t+2t^4+14t^5+(88+2u)t^6+(632+14u)t^7+(5152+90u)t^8+\cdots.$$
\end{footnotesize}
\end{thm}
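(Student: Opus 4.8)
The plan is to mimic the structure of the proof of Theorem~\ref{thm-pat-27}, adapted to the mesh pattern Nr.~28, $p = \pattern{scale=0.5}{2}{1/1,2/2}{0/1,1/2,0/0,2/2,1/0,2/1}$. First I would draw the schematic picture of an occurrence $ab$ of $p$ in a king permutation $\sigma \in K$: the shaded boxes force everything below $a$ and between $a$ and $b$ in position, and everything above $a$ and to the left of $a$, to be empty; the only regions that may carry elements are the region $A$ below and to the left of $a$ (values smaller than $a$, positions before $a$), the region $B$ of large values in positions between $a$ and $b$, and the region $C$ to the right of $b$ above $a$. I would determine from the shading exactly which of $A$, $B$, $C$ may be empty and which king-avoidance constraint each inherits at its boundary with $a$ or $b$ (for instance, the sub-permutation formed by $A$ together with $a$, or by $B$ together with $a$ and $b$, cannot have a forbidden adjacency). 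Fixing $ab$ to be, say, the leftmost occurrence, the key point — exactly as in Theorems~\ref{thm-pat-27} and~\ref{thm-pat-16} — is that choosing ``leftmost'' turns one of the three regions into a \pattern{scale=0.5}{1}{1/1}{0/1,1/0}-avoiding (equivalently \pattern{scale=0.5}{1}{1/1}{0/0,1/1}-avoiding) block, so its generating function is $\frac{(1+t)A(t)}{1+t+tA(t)}$ from~\eqref{avd-begin-smallest-dist}, while the other two regions are counted by $A(t)$ or $B(t)$ according to Lemma~\ref{lem-length-1}.

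The difference from pattern Nr.~27 that produces the different (rational-in-$A(t)$) answer is that here the leftmost occurrence of $p$ interacts recursively with itself: after removing $a$ (and what is forced around it), the part of $\sigma$ carrying the large values is again a king permutation that may contain occurrences of $p$, so I expect a functional equation of the shape
\[
P(t) = A(t) - \Big(\text{(contribution of one occurrence of $p$)}\cdot P(t)\Big)
\]
or, more precisely, with one region replaced by $P(t)$ itself and another by the \pattern{scale=0.5}{1}{1/1}{0/1,1/0}-avoiding generating function, and with a subtraction of the ``$B$ and $C$ both empty'' degenerate term, exactly the phenomenon that makes the denominator $(1+t)^2 + t^2(A(t)-t-1)A(t)$ appear after clearing $B(t)=A(t)/(1+t)$ via Lemma~\ref{lem-length-1}. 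I would solve that linear equation for $P(t)$ and simplify using $B(t)(1+t)=A(t)$ to land on the stated formula.

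For the distribution $E(t,u)$, I would run the same decomposition but mark each occurrence of $p$ by $u$. Permutations in $K(p)$ contribute $P(t)$; a permutation containing at least one occurrence contributes $u$ times the ``one occurrence'' term with the recursive region now tracked by $E(t,u)$ (each further occurrence inside that region being recorded by the $u$'s carried by $E$), again subtracting the degenerate ``both empty'' case, which gives a linear equation
\[
E(t,u) = P(t) + u\,t^2\,(\text{product of region GFs, one of them }E(t,u)) - u\,t^2 B(t).
\]
Solving for $E(t,u)$ and substituting $B(t)=A(t)/(1+t)$, $P(t)$ from the first part, then collecting terms, should yield $E(t,u)=\frac{(1+t)^2A(t)}{1+t(2+t(1+(1-u)(A(t)-t-1)A(t)))}$; as a check one sets $u=1$ to recover $A(t)$ and $u=0$ to recover $P(t)$, and compares the low-order expansion with $1+t+2t^4+14t^5+(88+2u)t^6+(632+14u)t^7+(5152+90u)t^8+\cdots$. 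The main obstacle I anticipate is purely combinatorial bookkeeping at the start: reading off the correct non-empty/empty options for $A$, $B$, $C$ and their exact boundary constraints from the six shaded boxes, and in particular verifying that the ``leftmost occurrence'' choice really does force a strong-fixed-point-avoiding block in precisely one region — once that is pinned down, the algebra is routine manipulation with $A(t)$ and $B(t)=A(t)/(1+t)$.
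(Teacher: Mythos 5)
Your overall strategy---fix the leftmost occurrence $ab$, decompose $\sigma$ into the three unshaded regions, and solve a linear functional equation in which one region is counted recursively by $P(t)$ (resp.\ $E(t,u)$)---is the right one, and you correctly single out the recursive reappearance of $P(t)$ as the feature distinguishing Nr.~28 from Nr.~27. However, you have misread the mesh. The unshaded boxes of $p$ are $(0,2)$, $(1,1)$ and $(2,0)$, so the regions that may carry elements are: the block of \emph{large} values (above $b$) in positions \emph{before} $a$; the block of intermediate values in positions between $a$ and $b$; and the block of \emph{small} values (below $a$) in positions \emph{after} $b$. The three regions you describe (values below $a$ before $a$; values above $b$ between $a$ and $b$; values above $a$ after $b$) all lie in shaded boxes and are therefore forced to be empty. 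Built on the wrong regions, your functional equations cannot produce the stated formulas.

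This misreading propagates into the two structural guesses you import from Nr.~27/Nr.~16, neither of which applies here. First, the ``leftmost occurrence'' choice does not force any region to avoid $\pattern{scale=0.5}{1}{1/1}{0/1,1/0}$, so the factor $\frac{(1+t)A(t)}{1+t+tA(t)}$ never appears (consistently, it is absent from the stated answer). Second, there is no ``both regions empty'' correction term $-t^2B(t)$: instead, the middle block must be a \emph{non-empty} permutation in $K^{s\ell}$ (it can neither begin with its smallest element nor end with its largest, or $\sigma\notin K$), and this non-emptiness is what replaces the degenerate-case subtraction. The correct ingredients are: the top-left block avoids $p$ and is otherwise unrestricted, contributing $P(t)$; the middle block contributes $C(t)-1=\frac{A(t)-1-t}{(1+t)^2}$ by Lemma~\ref{lem-length-1}; the bottom-right block is an arbitrary king permutation, contributing $A(t)$ (and $E(t,u)$ in the distribution case, since its occurrences of $p$ survive). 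This yields
\begin{equation*}
P(t)=A(t)-t^2P(t)A(t)\big(C(t)-1\big),\qquad E(t,u)=P(t)+ut^2P(t)\big(C(t)-1\big)E(t,u),
\end{equation*}
which solve to the stated formulas. Pinning down which boxes are unshaded, and recognizing the role of $C(t)$ rather than the strong-fixed-point generating function, is the missing content of your proposal.
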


\begin{figure}[!ht]
\begin{center}
	\begin{tikzpicture}[scale=0.8, baseline=(current bounding box.center)]
	\foreach \x/\y in {0/0,0/1,1/0,1/2,2/1,2/2}		    
	\fill[gray!20] (\x,\y) rectangle +(1,1);
	\draw (0.01,0.01) grid (2+0.99,2+0.99);
	\filldraw (1,1) circle (3pt) node[above left] {$a$};
	\filldraw (2,2) circle (3pt) node[above left] {$b$};
	\node  at (0.5,2.5) {$A$};
	\node  at (1.5,1.5) {$B$};
	\node  at (2.5,0.5) {$C$};
	\end{tikzpicture}
	\caption{Related to the proof of Theorem~\ref{thm-pat-28}}\label{pic-thm-pat-28}
\end{center}
\end{figure}
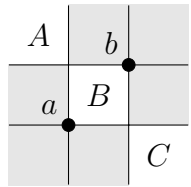

\begin{proof}
Suppose that a permutation $\sigma\in K$ contains at least one occurrence of the pattern $p$, and $ab$ in Fig.~\ref{pic-thm-pat-28} is an occurrence with the leftmost possible $ab$. Hence, $A$ avoids the pattern $p$, which is counted by $P(t)$ (there are no extra restrictions on $A$). The permutation formed by $B$ can be any non-empty permutation in $K^{s\ell}$ counted by $C(t)-1$ given by Theorem~\ref{triv-thm-13}. Also, $C$ can be any king permutation, which is counted by $A(t)$. Hence,
$$P(t)=A(t)-t^2P(t)A(t)\big(C(t)-1\big)$$
giving us the formula for $P(t)$.

Similarly, we have
$$E(t,u)=P(t)+ut^2P(t)\big(C(t)-1\big)E(t,u),$$
where $C$ contributes $E(t,u)$ because the occurrences of $p$ in $C$ lead to the occurrences of
$p$ in $\sigma\in K$. This gives the formula for $E(t,u)$ and completes our proof.
\end{proof}

\subsection{Distribution of the pattern Nr.\ 33} 

Our next theorem establishes the avoidance and the distribution of the pattern  
Nr.\ 33 = $\pattern{scale=0.5}{2}{1/1,2/2}{0/1,1/2,2/0,1/0,0/2,2/1}$.

\begin{thm}\label{thm-pat-33}
Let $p=\pattern{scale=0.5}{2}{1/1,2/2}{0/1,1/2,2/0,1/0,0/2,2/1}$, $E(t,u)=\sum_{n\geq 0}t^n\sum_{\sigma\in K_n}u^{p(\sigma)}$, 
and $P(t)$ be the generating function for $K(p)$. Then,
\begin{footnotesize}
$$P(t)=\frac{(1+t)\big(1+t+t(2+t)A(t)\big)A(t)}{\big(1+t+tA(t)\big)^2},$$ $$E(t,u)=\frac{(1+t)\big(1+t(1+u+ut+(2-u+t)A(t))\big)A(t)}{\big(1+t+tA(t)\big)\big(1+t(1+u+ut+(1-u)A(t))\big)},$$
\end{footnotesize}
where $A(t)$ is given by (\ref{gf-kings}). The initial terms in the expansion of  $E(t,u)$ are
\begin{footnotesize}
$$1+t+2t^4+14t^5+(88+2u)t^6+(636+10u)t^7+(5174+68u)t^8+\cdots.$$
\end{footnotesize}
\end{thm}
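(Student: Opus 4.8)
The plan is to show that occurrences of $p$ are governed entirely by the strong fixed points of a king permutation, and then to condition on the leftmost strong fixed point, in the spirit of the derivation of \eqref{length-1-formula}--\eqref{length-1-formulab}.

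\emph{Step 1: unpacking the mesh.} Let $\sigma_i\sigma_j$, with $a:=\sigma_i<\sigma_j=:b$, be an occurrence of the underlying pattern $12$. The shaded boxes $\boks{0}{1}$ and $\boks{0}{2}$ force every entry to the left of position $i$ to be smaller than $a$, while $\boks{1}{0}$ and $\boks{2}{0}$ force every entry smaller than $a$ to lie to the left of position $i$; hence $i=a$ and $a$ is a strong fixed point. By the symmetric use of $\boks{2}{0}$, $\boks{2}{1}$, $\boks{0}{2}$, $\boks{1}{2}$ we get $j=b$ and that $b$ is a strong fixed point; moreover the entries strictly between positions $i$ and $j$ are exactly the values in $(a,b)$, and the entries after position $j$ are exactly the values exceeding $b$. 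Conversely, every pair of strong fixed points of $\sigma$ yields an occurrence of $p$, and in a king permutation no two strong fixed points occupy consecutive positions (their values would differ by $1$), so the block of entries strictly between the two is nonempty. In particular, $\sigma\in K(p)$ precisely when $\sigma$ has at most one strong fixed point.

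\emph{Step 2: conditioning on the leftmost strong fixed point.} If $\sigma\in K$ has a strong fixed point, let $a$ be the leftmost one; then $\sigma$ factors uniquely as $B_0\,a\,\rho$, where $B_0$ is a possibly empty king permutation avoiding $\pattern{scale=0.5}{1}{1/1}{0/1,1/0}$ (it carries no strong fixed point, $a$ being leftmost) and $\red(\rho)\in K^{s}$ (the king condition at $a$ forces the first entry of $\rho$ to exceed $a+1$), and conversely any such pair $(B_0,\rho)$ assembles into a king permutation. By Theorem~\ref{thm-length-1} the block $B_0$ is enumerated by $A(t,0)=\frac{(1+t)A(t)}{1+t+tA(t)}$, and, with the occurrences of $p$ in $\sigma$ recorded by the strong fixed points carried inside $\rho$, the tail $\rho$ contributes $B(t,u)$ of \eqref{not-begin-end-extreme}. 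Adding the pattern-avoiding case, this should give
\begin{equation*}
E(t,u)=A(t,0)\bigl(1+t\,B(t,u)\bigr),\qquad P(t)=E(t,0)=A(t,0)\bigl(1+t\,A(t,0)\bigr),
\end{equation*}
where $B(t,0)=A(t,0)$ is used for the second identity; this last formula is exactly the decomposition of $K(p)$ into king permutations with $0$ strong fixed points and those with exactly one.

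\emph{Step 3: closed form.} Substituting $A(t,0)=\frac{(1+t)A(t)}{1+t+tA(t)}$ and $B(t,u)=\frac{(1+t)A(t)}{1+t(1+u+ut)+t(1-u)A(t)}$ into $A(t,0)(1+tB(t,u))$ and clearing denominators, the computation collapses because of the single identity
\begin{equation*}
1+t\bigl(1+u+ut+(2-u+t)A(t)\bigr)=\Bigl(1+t\bigl(1+u+ut+(1-u)A(t)\bigr)\Bigr)+t(1+t)A(t),
\end{equation*}
which turns $E(t,u)$ into the stated rational expression; putting $u=0$ and invoking Lemma~\ref{lem-length-1} yields the claimed formula for $P(t)$.

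The step I expect to be the real obstacle is the bookkeeping in Step 2: verifying that, once the leftmost strong fixed point has been stripped off, the occurrences of $p$ remaining in $\sigma$ are tracked faithfully by the single statistic already recorded in $B(t,u)$ — that is, that the contribution of the tail $\rho$ is governed by the substitution $t\mapsto ut$ built into $B(t,u)$ rather than by a more delicate weighting arising from the interplay of the removed strong fixed point with those still present. Granting that, the remaining manipulation is routine and rests only on Theorem~\ref{thm-length-1} and Lemma~\ref{lem-length-1}.
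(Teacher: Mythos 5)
Your Step 1 is correct and is the right reading of the mesh: an occurrence of $p$ is exactly a pair of strong fixed points, so $p(\sigma)=\binom{s}{2}$ where $s$ is the number of strong fixed points of $\sigma$. This settles avoidance correctly ($K(p)$ consists of the king permutations with at most one strong fixed point), and your $P(t)=A(t,0)\bigl(1+tA(t,0)\bigr)$ agrees with the stated formula. The gap is precisely the step you flagged in Step 2, and it is fatal. After stripping the leftmost strong fixed point, the tail $\rho$ carries $s-1$ strong fixed points, so $A(t,0)\bigl(1+tB(t,u)\bigr)$ weights $\sigma$ by $u^{s-1}$ (and by $u^{0}$ when $s=0$); but by your own Step 1 the correct weight is $u^{\binom{s}{2}}$. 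These agree only for $s\le 2$. King permutations with $s\ge 3$ do exist, the smallest at $n=11$: for instance $\sigma=1\,4\,2\,5\,3\,6\,9\,7\,10\,8\,11$ has strong fixed points $1,6,11$, and each of the pairs $(1,6)$, $(1,11)$, $(6,11)$ is an occurrence of $p$ (the boxes $\boks{0}{0}$, $\boks{1}{1}$, $\boks{2}{2}$ are unshaded, so the elements lying between two non-adjacent strong fixed points do not obstruct the occurrence). Such a permutation has $3$ occurrences of $p$ but is recorded with weight $u^{2}$ by your series. Decomposing instead according to all $k$ strong fixed points gives $E(t,u)=A(t,0)+\sum_{k\ge1}u^{\binom{k}{2}}t^{k}A(t,0)^{2}\bigl(A(t,0)-1\bigr)^{k-1}$, which is not a rational function of $u$ and first differs from your expression in the coefficient of $t^{11}$, where the four permutations with three strong fixed points should contribute $4u^{3}$ rather than $4u^{2}$.

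You should also know that your final formula coincides exactly with the one stated in the theorem, and the paper's own proof makes the same identification: it decomposes around the leftmost occurrence $ab$ and lets the tail $C$ contribute $B(t,u)$, asserting that ``the occurrences of $p$ in $C$ lead to the occurrences of $p$ in $\sigma$''. This assigns weight $u$ to each strong fixed point of $C$, whereas each such point actually forms an occurrence of $p$ with $a$, with $b$, and with every other strong fixed point of $C$. So the obstacle you correctly identified is not a defect of your write-up alone: the published argument shares it, and the theorem as stated appears to fail from $n=11$ onward. Compare the treatment of pattern Nr.\ 16, where the analogous quadratic multiplicity is handled correctly through the exponents $u^{\binom{i}{2}}$ in \eqref{thm-pat-16-relation-E}; pattern Nr.\ 33 requires the same care.
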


\begin{figure}
\begin{center}
	\begin{tikzpicture}[scale=0.8, baseline=(current bounding box.center)]
	\foreach \x/\y in {0/1,1/2,2/0,1/0,0/2,2/1}		    
	\fill[gray!20] (\x,\y) rectangle +(1,1);
	\draw (0.01,0.01) grid (2+0.99,2+0.99);
	\filldraw (1,1) circle (3pt) node[above left] {$a$};
	\filldraw (2,2) circle (3pt) node[above left] {$b$};
	\node  at (0.5,0.5) {$A$};
	\node  at (1.5,1.5) {$B$};
	\node  at (2.5,2.5) {$C$};
	\end{tikzpicture}
\end{center}
\caption{Related to the proof of  Theorem~\ref{thm-pat-33}}\label{pic-thm-pat-33}
\end{figure}
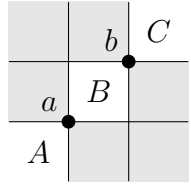

\begin{proof}
Suppose that a permutation $\sigma\in K$ contains at least one occurrence of $p$, and $ab$ in Fig.~\ref{pic-thm-pat-33} is an occurrence with the leftmost possible $ab$, which contributes the factor of $ut^2$. The king permutation $\sigma'\in K^\ell$ in $A$ avoids the pattern $\pattern{scale=0.5}{1}{1/1}{0/1,1/0}$, which is given by (\ref{avd-begin-smallest-dist}), namely, by  $\frac{(1+t)A(t)}{1+t+tA(t)}$. 
The permutation formed by $B$ can be any non-empty permutation in $K^{s\ell}$ that avoids  $\pattern{scale=0.5}{1}{1/1}{0/1,1/0}$ (since $b$ is the leftmost possible). These permutations are counted by $C(t,0)-1$ given by Theorem~\ref{triv-thm-13}. Finally, $C$ can be any permutation in $K^{s}$, which is counted by $B(t)$.
Therefore, we obtain
$$P(t)=A(t)-t^2\frac{(1+t)A(t)}{1+t+tA(t)}B(t)\big(C(t,0)-1\big).$$

Similarly, we can derive
$$E(t,u)=P(t)+ut^2\frac{(1+t)A(t)}{1+t+tA(t)}B(t,u)\big(C(t,0)-1\big),$$
where $C$ contributes $B(t,u)$ because the occurrences of $p$ in $C$ lead to the occurrences of
$p$ in $\sigma\in K$. Also, because both $A$ and $B$ are \pattern{scale=0.5}{1}{1/1}{0/1,1/0}-avoiding, no extra occurrence of $p$ can be introduced. Using the formula for $B(t,u)$ in (\ref{aux-formula-1}) we obtain $E(t,u)$.
\end{proof}

\subsection{Distribution of the pattern Nr.\ 55}

Our next theorem establishes the avoidance and  distribution of the pattern  
Nr.\ 55 = $\pattern{scale = 0.5}{2}{1/1,2/2}{0/1,1/2,0/0,2/0,1/1,2/1}$.

\begin{thm}\label{thm-pat-55}
Let $\pattern{scale = 0.5}{2}{1/1,2/2}{0/1,1/2,0/0,2/0,1/1,2/1}$, $E(t,u)=\sum_{n\geq 0}t^n\sum_{\sigma\in K_n}u^{p(\sigma)}$, 
and $P(t)$ be the generating function for $K(p)$. Then,
$$P(t)=\frac{(1+t)(A(t)-t)}{1+t(A(t)-t-1)},$$
$$E(t,u)=\frac{(1+t)\big(t(1-u)-(1-ut)A(t)\big)}{-1+t\big(1-u+t-(1-u)A(t)\big)},$$
where $A(t)$ is given by (\ref{gf-kings}). The initial terms in the expansion of  $E(t,u)$ are
\begin{footnotesize}
$$1+t+2t^4+14t^5+(88+2u)t^6+(632+14u)t^7+(5152+88u+2u^2)t^8+\cdots.$$
\end{footnotesize}
\end{thm}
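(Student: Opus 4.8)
\textbf{Proof proposal for Theorem~\ref{thm-pat-55}.}

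The plan is to follow the now-familiar ``leftmost occurrence'' decomposition used in the proofs of Theorems~\ref{thm-pat-27}, \ref{thm-pat-28}, and \ref{thm-pat-33}. First I would draw the schematic picture for an occurrence $ab$ of $p=\pattern{scale = 0.5}{2}{1/1,2/2}{0/1,1/2,0/0,2/0,1/1,2/1}$: the two dots sit at positions $(1,1)$ and $(2,2)$ of the pattern box, and the shaded cells $\boks{0}{0},\boks{0}{1},\boks{2}{0},\boks{1}{1},\boks{1}{2},\boks{2}{1}$ force the ``outside'' regions to be empty except for the region $\boks{0}{2}$ (call it $A$, the elements above-left of $a$) and the region $\boks{2}{2}$ (call it $C$, above-right of $b$), together with possibly $\boks{1}{0}$... wait — I must read the shading carefully. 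Reading $R=\{\boks{0}{1},\boks{1}{2},\boks{0}{0},\boks{2}{0},\boks{1}{1},\boks{2}{1}\}$: the column strip between $a$ and $b$ (positions between the two dots, i.e. $x\in(1,2)$) has cells $\boks{1}{0},\boks{1}{1},\boks{1}{2}$ all shaded, so nothing lies strictly between $a$ and $b$ in position; $a$ and $b$ are adjacent in $\sigma$. Since $\sigma$ is a king permutation, $b\neq a+1$, so there must be elements with values strictly between $a$ and $b$ — but the row strip $y\in(1,2)$ is entirely shaded ($\boks{0}{1},\boks{1}{1},\boks{2}{1}$ all in $R$), which is a contradiction unless... so in fact the only freedom is in the regions left of $a$ at various heights and right of $b$. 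I would carefully catalogue exactly which of the nine regions $\boks{i}{j}$ are unshaded and hence allowed to be nonempty, then translate the king-permutation adjacency constraint ($b$ and $a$ adjacent in $\sigma$) into the requirement that the ``separating'' elements live in a specific unshaded region.

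Once the geometry is pinned down, the counting goes as follows. Fix $ab$ to be the leftmost occurrence (leftmost possible $a$, then leftmost possible $b$). This fixes what the sub-permutation formed by $A$ (the part of $\sigma$ weakly to the left of $a$, excluding $a$) must look like: it should be a king permutation subject to an avoidance condition (most likely avoiding $\pattern{scale=0.5}{1}{1/1}{0/1,1/0}$ or lying in one of $K^s,K^\ell,K^{s\ell}$), enumerated by the appropriate series from Lemma~\ref{lem-length-1} or Theorem~\ref{thm-length-1}; the factor $t$ or $t^2$ records $a$ (and $b$ if $b$ is forced adjacent). The part to the right of $b$ should be an essentially unrestricted king permutation, contributing $A(t)$ for avoidance and $E(t,u)$ or $B(t,u)$ for the distribution, since occurrences of $p$ entirely inside that tail remain occurrences in $\sigma$. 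I would then write
\[
P(t)=A(t)-\bigl(\text{contribution of permutations containing }p\bigr),
\]
subtracting off the degenerate sub-case (where the forced-nonempty regions would in fact be empty, violating king-ness — compare the ``$-t^2B(t)$'' correction in Theorem~\ref{thm-pat-27}), solve the resulting linear equation for $P(t)$, and simplify using $B(t)=A(t)/(1+t)$ from \eqref{B(x)}. The shape of the claimed answer, $P(t)=\tfrac{(1+t)(A(t)-t)}{1+t(A(t)-t-1)}$, strongly suggests a single linear equation of the form $P(t)=\bigl(A(t)-t\bigr)-\alpha(t)\,P(t)$ with $\alpha(t)=t(A(t)-t-1)/(1+t)$ or similar, i.e. the ``$A$'' block is itself $p$-avoiding (hence recursive) while everything else is free — I would aim to realize exactly that structure. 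For $E(t,u)$ the same decomposition gives $E(t,u)=P(t)+u\cdot(\text{stuff})$ where the recursive block again carries $E(t,u)$ or $B(t,u)$; solving the linear equation in $E(t,u)$ and substituting \eqref{gf-kings}, \eqref{B(x)}, \eqref{not-begin-end-extreme} yields the stated formula. Finally I would verify the initial terms $1+t+2t^4+14t^5+(88+2u)t^6+(632+14u)t^7+(5152+88u+2u^2)t^8+\cdots$ against the formula as a sanity check; note that $t^8$ is the first coefficient that is quadratic in $u$, consistent with a decomposition that can nest two occurrences.

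The main obstacle I anticipate is the first step: correctly reading the mesh and determining which regions may be nonempty and, crucially, where the value-separators demanded by the king condition are allowed to sit. In several of the earlier theorems the ``can't both be empty'' degenerate case is exactly what produces the extra additive correction term (the $-t^2B(t)$ in Theorems~\ref{thm-pat-27} and \ref{thm-pat-33}, the $t$ vs $t^2$ bookkeeping elsewhere), and getting that wrong would throw off the low-order coefficients. A secondary subtlety is making sure no \emph{extra} occurrences of $p$ are accidentally created at the interface between the $A$-block, the dots $ab$, and the right tail — in the analogous Theorem~\ref{thm-pat-33} proof this is handled by the observation that $A$ and $B$ being $\pattern{scale=0.5}{1}{1/1}{0/1,1/0}$-avoiding prevents new occurrences, so I would look for the corresponding ``no interference'' statement here (likely that the $A$-block avoiding some length-one mesh pattern, together with $a$, $b$ being adjacent, forecloses any straddling occurrence). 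Once those two points are settled, the algebra is routine manipulation of $A(t)$, $B(t)$, and $B(t,u)$.
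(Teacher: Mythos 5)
Your plan has a concrete error at the very first step, and the decomposition you sketch from it does not match the actual structure of the pattern. Reading the mesh $R=\{\boks{0}{0},\boks{0}{1},\boks{1}{1},\boks{1}{2},\boks{2}{0},\boks{2}{1}\}$: the box $\boks{1}{0}$ is \emph{not} shaded, so the column strip between $a$ and $b$ is not fully shaded and $a,b$ need not be positionally adjacent. What is fully shaded is the \emph{row} strip $y\in(1,2)$, so there are no values strictly between $a$ and $b$, forcing $b=a+1$; the king condition then forces the unshaded region $\boks{1}{0}$ (all elements of value $<a$, sitting positionally between $a$ and $b$) to be non-empty. The other unshaded regions are $\boks{0}{2}$ and $\boks{2}{2}$, so \emph{both} the prefix and the suffix consist entirely of elements of value $>b$. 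This kills your proposed decomposition: the right tail is not "an essentially unrestricted king permutation contributing $A(t)$," and the prefix is not the block that carries a length-one avoidance condition from Theorem~\ref{thm-length-1}. The correct split is by \emph{value}, not by position: the paper picks the occurrence with the \emph{highest} $a$, so that the elements above $a$ form a single $p$-avoiding block $AbB$ (contributing $P(t)-1$, and in the distribution still only $P(t)-1$), while the free, possibly-nested block is $C$, the non-empty permutation of $\{1,\dots,a-1\}$ wedged between $a$ and $b$, which must not begin with its largest element (contributing $B(t)-1$ for avoidance and $E^*(t,u)-1$ for the distribution, where $E^*$ is the distribution on king permutations not beginning with the largest element). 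So the nested occurrences live in the low-value middle block, not in the right tail as you assume.

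There is a second idea you do not anticipate and that your generic "$-t^2B(t)$ degenerate-case correction" would not supply: the factor $(t+t^2)$ in the paper's equation $P(t)+(B(t)-1)(P(t)-1)(t+t^2)=A(t)$. Removing $a$ and $C$ from $\sigma$ can leave a block $AbB$ that is \emph{not} a king permutation, with exactly one violation, namely $A$ ending in $b+1$ immediately before $b$; this violation is healed in $\sigma$ by the interposed $a\,C$. These "almost king" blocks are counted by $t(P(t)-1)$ (insert $b+1$ just before the minimum of a $p$-avoiding king permutation), giving the extra $t^2(P(t)-1)$ term. Without this additive case the linear equation you would solve gives the wrong $P(t)$ already at low order. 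So while your instinct that everything reduces to a single linear equation in $P(t)$ (and one in $E(t,u)$ after a short side computation of an auxiliary series) is right, the combinatorial content — which block is recursive, where the king-condition separator must sit, and the almost-king correction — is missing or misidentified, and the proof as planned would not go through.
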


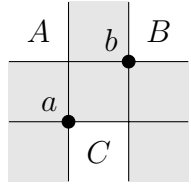
\begin{figure}[!ht]
\begin{center}
	\begin{tikzpicture}[scale=0.8, baseline=(current bounding box.center)]
	\foreach \x/\y in {0/0,0/1,1/1,1/2,2/0,2/1}		    
	\fill[gray!20] (\x,\y) rectangle +(1,1);
	\draw (0.01,0.01) grid (2+0.99,2+0.99);
	\filldraw (1,1) circle (3pt) node[above left] {$a$};
	\filldraw (2,2) circle (3pt) node[above left] {$b$};
	\node  at (0.5,2.5) {$A$};
	\node  at (2.5,2.5) {$B$};
	\node  at (1.5,0.5) {$C$};
	\end{tikzpicture}
		\caption{Related to the proof of Theorem~\ref{thm-pat-55}}\label{pic-thm-pat-55}
\end{center}
\end{figure}

\begin{proof}
We claim that 
\begin{equation}\label{av-pattern-55} 
P(t)+(B(t)-1)(P(t)-1)(t+t^2)=A(t). 
\end{equation}
Indeed, each permutation $\sigma\in K$, counted by $A(t)$,  either avoids $p$ (which is counted by the $P(t)$ term in \eqref{av-pattern-55}), or  contains at least one occurrence of $p$. Among all such occurrences, pick an occurrence $ab$ where $a$ is  the \emph{highest} possible,  so that, referring to Fig.~\ref{pic-thm-pat-55}, the sub-permutation given by $AbB$ is $p$-avoiding, and $C$ is any non-empty king permutation not beginning with the largest element. The possibilities for $C$ are counted by  $B(t)-1$. The possibilities for $AbB$ are given by $P(t)-1$. It remains to explain the factor of $(t+t^2)$ in (\ref{av-pattern-55}). In this factor, $t$ corresponds to $a$ in the situation when $AbB$ is a king permutation. However, instead of a king permutation we can begin with a non-king permutation $AbB$ where the only violation will be the situation that $A$ ends with element $b+1$. This violation will disappear once the element $a$ is inserted. Solving (\ref{av-pattern-55}) for $P(t)$, and using the formula (\ref{B(x)}) for $B(t)$, we obtain the claimed formula for $P(t)$.	

Similarly to the avoidance case, we can derive a functional equation for $E(t,u)$, which is
\begin{equation}\label{eq-E-pattern-55}E(t,u)=P(t)+u(E^*(t,u)-1)(P(t)-1)(t+t^2),\end{equation}
where $E^*(t,u)$ is the distribution of $p$ on king permutations not beginning with the largest element and the factor of $u$ corresponds to the occurrence $ab$. Here we used the fact that when merging together $AbB$, $a$ and $C$, no occurrence of $p$ will disappear, and all occurrence of $p$ in $\sigma$ are those in $C$ and the occurrence $ab$. The formula for $E(t,u)$ can now be obtained from (\ref{eq-E-pattern-55}) after observing the relation $E^*(t,u)=E(t,u)-tE^*(t,u)$.
\end{proof}

\subsection{Distribution of the pattern Nr.\ 63} 

Our next theorem establishes the avoidance and  distribution of the pattern  
Nr.\ 63 = $\pattern{scale = 0.6}{2}{1/1,2/2}{0/1,1/2,0/0,2/1,2/0}$.

\begin{thm}\label{thm-pat-63}
Let $p=\pattern{scale = 0.6}{2}{1/1,2/2}{0/1,1/2,0/0,2/1,2/0}$, $E(t,u)=\sum_{n\geq 0}t^n\sum_{\sigma\in K_n}u^{p(\sigma)}$, 
and $P(t)$ be the generating function for $K(p)$. Then,
\begin{footnotesize}
$$P(t)=\frac{2A(t)-t-1}{A(t)-t},$$ 
$$E(t,u)=\frac{(1+t)(1-u)+(-2+u+ut^2)A(t)}{-u+t(1-u+ut)-(1-u)A(t)},$$
\end{footnotesize}
where $A(t)$ is given by (\ref{gf-kings}). The initial terms in the expansion of  $E(t,u)$ are
\begin{footnotesize}
$$1+t+2t^4+(12+2u)t^5+(76+14u)t^6+(556+88u+2u^2)t^7+(4592+636u+14u^2)t^8+\cdots.$$
\end{footnotesize}
\end{thm}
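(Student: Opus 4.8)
The plan is to first pin down exactly what an occurrence of $p$ looks like inside a king permutation, and then convert that description into a functional equation for $P(t)$ (and, with an extra variable $u$, for $E(t,u)$) that can be solved using Lemma~\ref{lem-length-1}, Theorem~\ref{thm-length-1}, and the identity~\eqref{gf-kings}.

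\emph{Structure of occurrences.} In an occurrence $ab$ of $p$ with $a$ in position $i$ and $b$ in position $j$, the shaded cells force every element to the left of $a$ and every element to the right of $b$ to have value larger than $b$, while every element strictly between positions $i$ and $j$ has value smaller than $b$. Thus $a$ is a left-to-right minimum and $b$ is a right-to-left minimum of $\sigma$, and a short counting argument (the values exceeding $b$ are exactly those occupying positions outside $[i,j]$) shows that the factor $\sigma_i\sigma_{i+1}\cdots\sigma_j$ consists precisely of the values $\{1,2,\dots,j-i+1\}$ and ends with its maximum $j-i+1$. Conversely, any contiguous factor of a king permutation whose value set is an initial segment $\{1,\dots,k\}$ with $k\ge 2$ and which ends with $k$ determines exactly one occurrence of $p$. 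So occurrences of $p$ in $\sigma$ biject with its ``bottom blocks'', and these bottom blocks are nested (each contains the value $1$), so there is a unique maximal one whenever $\sigma$ contains $p$.

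\emph{Avoidance.} Each $\sigma\in K$ either avoids $p$ (contributing $P(t)$) or contains it; in the latter case I fix the canonical bottom block $\beta$ (the maximal one) and write $\sigma=X\beta Z$ with $X,Z$ the possibly empty prefix and suffix. After reduction, $\beta$ is a king permutation of length $\ge 2$ and the permutation read off from $X$ and $Z$ is $p$-avoiding; the only delicate point is the king condition at the two cells of the grid bordering $\beta$, which forces a small ``$t$ versus $t^2$''-type correction exactly as in the proof of Theorem~\ref{thm-pat-55} (and it is precisely this correction, together with the ``ends with its maximum'' condition on $\beta$, that must conspire so that $\beta$ gets counted by the full $A(t)$ with no further restriction). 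Carrying this out should give the relation $P(t)+\bigl(P(t)-1\bigr)\bigl(A(t)-t-1\bigr)=A(t)$, equivalently $\bigl(P(t)-1\bigr)\bigl(A(t)-t\bigr)=A(t)-1$, whence Lemma~\ref{lem-length-1} and \eqref{gf-kings} yield $P(t)=\dfrac{2A(t)-t-1}{A(t)-t}$.

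\emph{Distribution.} I repeat the same decomposition while marking the chosen occurrence with a factor $u$: the $p$-avoiding part produced from $X$ and $Z$ contributes no further occurrences, whereas every occurrence of $p$ inside $\beta$ survives as an occurrence in $\sigma$ (a bottom block of $\beta$ is a bottom block of $\sigma$). Hence $\beta$ should be weighted by $E(t,u)$ rather than by $A(t)$, giving a functional equation of the shape $E(t,u)=P(t)+u\,\bigl(\text{$\beta$-contribution counted by }E(t,u)\bigr)\bigl(P(t)-1\bigr)$ together with the boundary corrections; this is linear in $E(t,u)$, so solving it and simplifying with \eqref{gf-kings} produces the claimed rational expression, which one then checks against the listed low-order terms. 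I expect the main obstacle to be getting the canonical occurrence and the accompanying king-condition bookkeeping at the two border cells exactly right — in particular confirming that the ``ends with maximum'' constraint on the block is absorbed so cleanly that the block is counted by the full $A(t)$ (resp.\ $E(t,u)$) with only a $t/t^2$ correction rather than by some genuinely restricted series; once the functional equation is secured the remaining algebra is routine.
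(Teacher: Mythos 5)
Your structural analysis is correct and is essentially what underlies the paper's proof: occurrences of $p$ biject with contiguous factors whose value set is $\{1,\dots,k\}$ ($k\ge 2$) and which end with $k$, these ``bottom blocks'' are nested, and your canonical maximal block is exactly the paper's choice of the occurrence $ab$ with the highest possible $b$. Your guessed avoidance equation $P(t)+(P(t)-1)(A(t)-t-1)=A(t)$ is also correct; it is the paper's equation $P(t)+(P(t)-1)(B(t)-1)(1+t)=A(t)$ after substituting $B(t)=A(t)/(1+t)$. However, the step you yourself flag as the main obstacle is a genuine gap, and the reading you propose for the factor $A(t)-t-1$ is wrong. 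The block $\beta$ is genuinely restricted: it must end with its maximum, and king permutations of length at least $2$ with that property are counted by $t(B(t)-1)=2t^5+12t^6+\cdots$, not by $A(t)-1-t=2t^4+14t^5+\cdots$, so no correction can make $\beta$ ``counted by the full $A(t)$.'' Moreover, if the outer part is literally the concatenation $XZ$, it need not be a king permutation (the last letter of $X$ and the first letter of $Z$ may differ by $1$ once $\beta$ is removed) and it is empty whenever $\sigma$ is its own maximal block, so it cannot be counted by $P(t)-1$. The paper resolves both problems at once by keeping the top element $b$ of the block in the outer part: the outer permutation $AbB$ is nonempty, has no new $X$--$Z$ adjacency, and is either a $p$-avoiding king permutation (counted by $P(t)-1$) or a $p$-avoiding almost-king permutation whose unique violation is $A$ ending with $b+1$ (counted by $t(P(t)-1)$ via the ``replace $1$ by $21$'' bijection), while the rest of the block, $aCD$, is an arbitrary nonempty king permutation not ending with its maximum, counted by $B(t)-1$. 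The identity $(1+t)(B(t)-1)=A(t)-t-1$ is why your factor comes out right, not because $\beta$ is unrestricted.

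The same issue recurs, non-cosmetically, in your distribution sketch. Since every occurrence of $p$ in $\sigma$ is a bottom block of the maximal block, the block must be weighted by the restricted series $E^*(t,u)$ counting king permutations that end with their largest element by occurrences of $p$ --- not by $E(t,u)$ --- and $E^*$ needs its own recursion, $E^*(t,u)=t+ut\big(E(t,u)-1-E^*(t,u)\big)$, obtained by peeling off the final (largest) letter. Also, no extra factor of $u$ should be attached to the canonical occurrence, since it is already accounted for inside $E^*$; the paper's equation is $E(t,u)=P(t)+\frac{(E^*(t,u)-t)(P(t)-1)}{t}+(E^*(t,u)-t)(P(t)-1)$, the division by $t$ coming from gluing the two copies of $b$. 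So your skeleton is right, but the two points you leave open are precisely where the work lies, and the specific claims you make there (block counted by the full $A(t)$; block weighted by $E(t,u)$ with an extra $u$) are false as stated.
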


\begin{figure}[!ht]
\begin{center}
	\begin{tikzpicture}[scale=0.8, baseline=(current bounding box.center)]
	\foreach \x/\y in {0/1,1/2,0/0,2/1,2/0}
	\fill[gray!20] (\x,\y) rectangle +(1,1);
	\draw (0.01,0.01) grid (2+0.99,2+0.99);
	\filldraw (1,1) circle (3pt) node[above left] {$a$};
	\filldraw (2,2) circle (3pt) node[above left] {$b$};
	\node  at (0.5,2.5) {$A$};
	\node  at (2.5,2.5) {$B$};
	\node  at (1.5,1.5) {$C$};
	\node  at (1.5,0.5) {$D$};
	\end{tikzpicture}
		\caption{Related to the proof of Theorem~\ref{thm-pat-63}}\label{pic-thm-pat-63}
\end{center}
\end{figure}
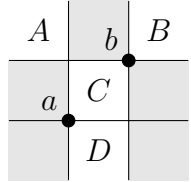

\begin{proof}
We claim that 
\begin{equation}\label{av-pattern-63} 
P(t)+(P(t)-1)(B(t)-1)(1+t)=A(t). 
\end{equation}
Indeed, each permutation $\sigma\in K$, which is counted by the right-hand side in \eqref{av-pattern-63},  either avoids $p$ (which is counted by the $P(t)$ term in \eqref{av-pattern-63}), or  contains at least one occurrence of $p$. Among all such occurrences, pick the occurrence $ab$ with the \emph{highest} possible $b$ as shown in Fig.~\ref{pic-thm-pat-63}. Referring to this figure, we note that the permutation formed by $A$ and $B$ together with $b$ must be nonempty and $p$-avoiding. There are two possible cases. 
\begin{itemize}
\item $\red(AbB)$ is a $p$-avoiding king permutation counted by $P(t)-1$ in \eqref{av-pattern-63}. Since $ab$ is an occurrence of $p$, in the part below $b$, which is formed by $C$ and $D$ together with $a$, we can have any non-empty king permutation that cannot end with the largest element, and hence is counted by $B(t)-1$. 

\item $\red(AbB)$ is a $p$-avoiding almost king permutation, with the only violation that $A$ ends with $b+1$ (the smallest element in $A$), which is  counted by $t(P(t)-1)$, because we can take any non-empty king permutation and replace its minimum element 1 by 21 and increase all other elements by 1 (such a replacement does not introduce an occurrence of $p$ in $AbB$); the factor of $t$ is given by the element 2. The rest of $\sigma$ is again given by $B(t)-1$. 
\end{itemize}

The considerations above give us (\ref{av-pattern-63}), and using the formula (\ref{B(x)}) for $B(t)$, we get the formula for $P(t)$.

For the distribution, we have the following functional equation:
\begin{equation}\label{dis-pattern-63} 
E(t,u)=P(t)+\frac{(E^*(t,u)-t)(P(t)-1)}{t}+(E^*(t,u)-t)(P(t)-1), 
\end{equation}
where $E^*(t,u)$ is the generating function for the distribution of $p$ on king permutations ending with the largest element. Indeed, either a permutation $\sigma\in K$ avoids $p$, which is given by $P(t)$, or it contains an occurrence $ab$ of $p$, and as above, we can let $b$ be the highest possible. We think of assembling $\sigma$ (with an occurrence of $p$) from two permutations, $\sigma'=AbB$ and the permutation $\sigma''$ formed by the elements in $\{1,2,\ldots,b\}$ in $\sigma$, so that $\sigma''$ is a king permutation ending with the largest element, and we will be glueing the smallest element in $\sigma'$ ($b$) with the largest element in $\sigma''$ ($b$). Similarly to the case of avoidance we have two cases to consider corresponding to the bullet points above. The first (resp., second) bullet point will give the second (resp., third) term in the right-hand side in (\ref{dis-pattern-63}), where division by $t$ corresponds to gluing two elements. Hence, (\ref{dis-pattern-63}) is explained. 

We next note that $E^*(t,u)$ satisfies
\begin{equation}\label{E'-63} 
E^*(t,u)=t+ut(E(t,u)-1-E^*(t,u)).
\end{equation}
Indeed, either a permutation counted by $E^*(t,u)$ is of length one (giving the term of $t$), or the leftmost element and the rightmost element form an occurrence of $p$, giving the factor $ut$, where $t$ corresponds to the largest element. Note that the largest element cannot be involved in any other occurrence of $p$. Finally, note that the elements to the left of the largest (rightmost) element form a non-empty king permutation not ending with the largest element.

Substituting (\ref{E'-63}) in (\ref{dis-pattern-63}), and using the formula for $P(t)$, we get the formula for $E(t,u)$.
\end{proof}

\subsection{Distribution of the pattern Nr.\ 64}

\begin{thm}\label{thm-pat-64}
Let $p=\pattern{scale = 0.5}{2}{1/1,2/2}{0/1,1/2,2/0,0/2,1/1}$, $E(t,u)=\sum_{n\geq 0}t^n\sum_{\sigma\in K_n}u^{p(\sigma)}$, 
and $P(t)$ be the generating function for $K(p)$. Then,
\begin{footnotesize}
$$P(t)=1+t+\frac{1}{1+t}-\frac{1}{A(t)},$$ 
$$E(t,u)=\frac{(-1+u)(1+ut)(1+t)+\big(2-u+t(2+t+u-u^2)\big)A(t)}{u(1+ut)(1+t)+(1-u)(1+t+ut)A(t)},$$
\end{footnotesize}
where $A(t)$ is given by (\ref{gf-kings}). The initial terms in the expansion of  $E(t,u)$ are
\begin{footnotesize}
$$1+t+2t^4+(10+4u)t^5+(68+20u+2u^2)t^6+(500+136u+10u^2)t^7+(4170+1004u+68u^2)t^8+\cdots.$$
\end{footnotesize}
\end{thm}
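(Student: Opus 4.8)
I would begin by translating the statistic $p(\sigma)$, for $p=\pattern{scale = 0.5}{2}{1/1,2/2}{0/1,1/2,2/0,0/2,1/1}$, into a classical one. Suppose $\sigma_i\sigma_j$ with $i<j$ and $\sigma_i<\sigma_j$ is an occurrence of $p$. The four shaded boxes in the two leftmost columns force every entry in positions $1,\dots,i-1,i+1,\dots,j-1$ to be smaller than $\sigma_i$, while the box $\boks{2}{0}$ forces every entry in positions $j+1,\dots,n$ to be larger than $\sigma_i$. Hence every value in $\{1,\dots,\sigma_i-1\}$ must lie among the first $j-1$ positions, so these positions carry exactly the values $\{1,\dots,\sigma_i\}$; in particular $j=\sigma_i+1$. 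Conversely, whenever $\sigma_1\sigma_2\cdots\sigma_m$ is a permutation of $\{1,\dots,m\}$ for some $m$ with $1\le m\le n-1$, the position of the value $m$ together with position $m+1$ is an occurrence of $p$, and distinct such $m$ give distinct occurrences. Therefore $p(\sigma)$ equals the number of $m\in\{1,\dots,n-1\}$ with $\{\sigma_1,\dots,\sigma_m\}=\{1,\dots,m\}$; equivalently, writing $\sigma=\gamma_1\oplus\gamma_2\oplus\cdots\oplus\gamma_k$ for the finest decomposition of $\sigma$ into nonempty $\oplus$-indecomposable summands, $p(\sigma)=k-1$. In particular $K(p)$ is exactly the set of $\oplus$-indecomposable king permutations, so $P(t)=A(t,0)$ is their generating function.

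Next I would describe the building blocks. A nonempty $\oplus$-indecomposable king permutation is either the permutation $1$, or has length at least $2$, in which case it can neither begin with its smallest entry nor end with its largest entry (either would produce a complete prefix of length $1$ or $n-1$). Write $R(t)=P(t)-1-t$ for the generating function of the length-$\ge 2$ indecomposable king permutations. Every king permutation is an $\oplus$-concatenation of such blocks, and the king condition at the junction of consecutive summands $\gamma_i,\gamma_{i+1}$ fails if and only if $\gamma_i$ ends with its maximum and $\gamma_{i+1}$ begins with its minimum, that is, if and only if $\gamma_i=\gamma_{i+1}=1$. So the only constraint is that no two consecutive summands equal $1$, with $p$ tracked by $u^{k-1}$.

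I would then peel off the first summand, simultaneously tracking king permutations that do not begin with their smallest entry; let $\widetilde E(t,u)$ be the corresponding refinement of $E(t,u)$. This gives
$$E(t,u)=1+t\bigl(1-u+u\widetilde E(t,u)\bigr)+R(t)\bigl(1-u+uE(t,u)\bigr),\qquad \widetilde E(t,u)=1+R(t)\bigl(1-u+uE(t,u)\bigr),$$
where the factors $1-u+u(\,\cdot\,)$ record that a nonempty remainder contributes one further occurrence of $p$ and that a summand equal to $1$ must be followed by a permutation not beginning with its minimum. Eliminating $\widetilde E$ yields the linear relation $E\bigl(1-uR(1+ut)\bigr)=1+t+(1-u)(1+ut)R$, hence
$$E(t,u)=\frac{1+t+(1-u)(1+ut)R(t)}{1-uR(t)(1+ut)}.$$
Specializing to $u=1$ must return $E(t,1)=A(t)$, which forces $1-R(t)(1+t)=\tfrac{1+t}{A(t)}$, so $R(t)=\tfrac{1}{1+t}-\tfrac{1}{A(t)}$ and $P(t)=1+t+\tfrac{1}{1+t}-\tfrac{1}{A(t)}$. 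Substituting $R(t)=\frac{A(t)-1-t}{(1+t)A(t)}$ into the formula for $E$ and clearing denominators then gives the stated closed form.

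The main obstacle is the first step: one has to read the five shaded boxes carefully enough to recognize that an occurrence of $p$ is completely rigid — in particular that the right position of an occurrence must be $\sigma_i+1$ — which is exactly what identifies $p(\sigma)$ with the number of proper complete prefixes of $\sigma$. After this reduction the argument is routine; the only points needing care are the ``no two consecutive singleton summands'' condition imposed by the king property, and recovering $R$ (equivalently $P$) from the $u=1$ specialization instead of computing it separately.
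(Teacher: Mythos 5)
Your proof is correct, and the key structural observation is sound: an occurrence of $p$ is indeed completely rigid, forcing $j=\sigma_i+1$ and identifying $p(\sigma)$ with the number of proper complete prefixes, i.e.\ with one less than the number of summands in the finest $\oplus$-decomposition. Your junction analysis (a junction violates the king condition exactly when both adjacent summands are singletons, since an indecomposable block of length at least $2$ can neither start with its minimum nor end with its maximum) is also right, and the resulting system for $E$ and $\widetilde E$ reproduces the stated formulas; I checked that your $\frac{1+t+(1-u)(1+ut)R}{1-uR(1+ut)}$ with $R=\frac{A(t)-1-t}{(1+t)A(t)}$ agrees with the theorem's closed form.

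The route is genuinely different from the paper's, though built on the same hidden rigidity. The paper never states the identity $p(\sigma)=k-1$; instead it peels off only the \emph{first} occurrence (leftmost $a$), writes $\sigma$ as a $p$-avoiding prefix $\oplus$-glued to an arbitrary king suffix, computes $P(t)$ by a direct inclusion--exclusion $P=A-\big((P-1)(A-1)-t^2B\big)$ (the $t^2B(t)$ correcting for the unique bad junction, which matches your ``two consecutive singletons'' case), and then sets up a recursion involving $E^*$, the distribution on king permutations beginning with the smallest element (the complement of your $\widetilde E$). Your version decomposes the whole permutation into its sequence of indecomposable blocks at once, which makes the statistic transparent and lets you recover $R(t)$ --- and hence $P(t)$ --- for free from the $u=1$ specialization rather than by a separate avoidance argument. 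The paper's first-block peeling is more in line with the uniform method it uses for the other patterns; your global decomposition is cleaner and more self-explanatory for this particular pattern. The only cosmetic issues are the overloaded notation $A(t,0)$ (already used in Theorem~\ref{thm-length-1} for a different series) and the harmless inclusion of the empty and singleton permutations in ``$K(p)$ equals the indecomposables,'' both of which your definition $R(t)=P(t)-1-t$ already handles correctly.
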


\begin{figure}[!ht]
\begin{center}
	\begin{tikzpicture}[scale=0.8, baseline=(current bounding box.center)]
	\foreach \x/\y in {0/1,1/2,2/0,0/2,1/1}
	\fill[gray!20] (\x,\y) rectangle +(1,1);
	\draw (0.01,0.01) grid (2+0.99,2+0.99);
	\filldraw (1,1) circle (3pt) node[above left] {$a$};
	\filldraw (2,2) circle (3pt) node[above left] {$b$};
	\node  at (2.5,2.5) {$C$};
	\node  at (2.5,1.5) {$D$};
	\node  at (0.5,0.5) {$A$};
	\node  at (1.5,0.5) {$B$};
	\end{tikzpicture}
		\caption{Related to the proof of Theorem~\ref{thm-pat-64}}\label{av-pic-pat-64}
\end{center}
\end{figure}
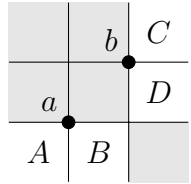

\begin{proof}
Suppose that a permutation $\sigma\in K$ contains at least one occurrence of $p$, and $ab$ in Fig.~\ref{av-pic-pat-64} is an occurrence with the leftmost possible $a$. Note that $B$ and $D$ cannot be empty at the same time, or else $\sigma\not\in K$. 

The non-empty permutation formed by $A$, $B$, together with $a$ avoids $p$, which is counted by $P(t)-1$. The non-empty permutation formed by $C$, $D$, together with $b$ is counted by $A(t)-1$. Therefore, the number of permutations in $K(p)$ is
$$P(t)=A(t)-\big((P(t)-1)(A(t)-1)-t^2B(t)\big),$$
where we subtracted $t^2B(t)$ corresponding to the case of $B$ and $D$ being empty at the same time (resulting in a non-king permutation). For the subtracted term,  $C$ gives $B(t)$ choices and $A$ must be empty, or else its maximum element together with $a$ would form an occurrence of $p$ contradicting $ab$ being the leftmost occurrence. This gives the claimed formula for $P(t)$ using Lemma~\ref{lem-length-1}. 

As for distribution $E(t,u)$, permutations in $K(p)$ are counted by $P(t)$. On the other hand, if  $\sigma\in K$ contains at least one occurrence of $p$, then using our considerations above, such permutations are enumerated by
$$u\big(P(t)-1\big)\big(E(t,u)-1\big)-utE^*(t,u),$$
where $E^*(t,u)$ records the distribution of $p$ on king permutations beginning with the smallest element. Indeed, the non-empty permutation formed by $C$, $D$ together with $b$ is counted by $E(t,u)-1$ as no occurrence of $p$ can start to the left of $b$ and end to the right of $b$ (due to the element $b$). However, we need to subtract the cases when both $B$ and $D$ are empty.

Considering the case when both $B$ and $D$ are empty, $E^*(t,u)$ is given by the element $b$ and $C$. Hence,  
$$E^*(t,u)=t+ut\big(E(t,u)-E^*(t,u)-1\big).$$
Indeed, $A$ must be empty, or else $ab$ in Fig.~\ref{av-pic-pat-64} would not be the leftmost occurrence of $p$. The permutation formed by $C$ together with $b$ can cause occurrences of $p$ (explaining the term $utE(t,u)$), and it cannot have element $b+1$ in the second position (explaining the term $-utE^*(t,u)$). The initial factor of $t$ corresponds to the case when $C$ is empty. We obtain the claimed formula for $E(t,u)$.
\end{proof}
 
\section{Concluding remarks}\label{final-sec}

In this paper, we provide  distribution of 22 mesh patterns  of length 2, appearing in \cite{KZ}, on king permutations. However, there are  other patterns discussed in  \cite{KZ}, and additionally,  pattern $\pattern{scale = 0.5}{2}{1/1,2/2}{0/0,0/1,0/2,1/0,1/1,2/0}$ (Nr.\ 66) from \cite{KZZ}, for which we were unable to determine the distributions on king permutations. These patterns can be found in Table~\ref{tab-2}, and we invite the reader to find distribution of these patterns on king permutations.   

  \begin{table}[!ht]
 	{
 		\renewcommand{\arraystretch}{1.3}
\begin{footnotesize}
 \begin{center} 
 		\begin{tabular}{|c|c||c|c|}
 			\hline
 			{\footnotesize Nr.\ } & {\footnotesize Repr.\ $p$}
 			&  {\footnotesize Nr.\ } & {\footnotesize Repr.\ $p$}   
 			\\[5pt]
 			\hline		\hline
 		3 &  $\pattern{scale = 0.5}{2}{1/1,2/2}{0/0,0/1,1/2}$ 
 & 
  		18 & $\pattern{scale = 0.5}{2}{1/1,2/2}{0/0,0/1,0/2,1/2,2/0,2/2}$   
\\[5pt]
 			\hline
 		5 & $\pattern{scale = 0.5}{2}{1/1,2/2}{0/0,0/1,0/2}$ 
 		&
		21 & $\pattern{scale = 0.5}{2}{1/1,2/2}{0/1,1/2,0/0,2/0,2/2}$   
 			\\[5pt] 
 			\hline
 		8 &  $\pattern{scale = 0.5}{2}{1/1,2/2}{0/0,0/1,1/0,1/1}$
 		& 
 		56 & $\pattern{scale = 0.5}{2}{1/1,2/2}{0/1,1/2,0/0,2/2,1/1,2/1}$
 		    \\[5pt] 
 			\hline
 		9 & $\pattern{scale = 0.5}{2}{1/1,2/2}{0/1,1/1,1/2,2/1}$
 			&
		65 & $\pattern{scale = 0.5}{2}{1/1,2/2}{0/1,1/0,0/0,1/1,2/2}$     
 			\\[5pt]
 			\hline
 		15 & $\pattern{scale = 0.5}{2}{1/1,2/2}{0/1,0/2,1/0,1/1,1/2}$
 			&
		66 & $\pattern{scale = 0.5}{2}{1/1,2/2}{0/0,0/1,0/2,1/0,1/1,2/0}$    
 			\\[5pt]
 			\hline
 		\end{tabular}
\end{center} 	
\end{footnotesize}}
 	\caption{The patterns whose distribution is known on all permutations \cite{KZ,KZZ}, yet remain unknown for king permutations. Patterns' numbers come from \cite{Hilmarsson2015Wilf,KZ,KZZ}.}\label{tab-2}
\end{table}

\vspace{0.5cm}

\noindent
{\bf Acknowledgement.} The authors are grateful to Sergey Kitaev for his useful discussions related to this paper. Zhang was supported by the National Natural Science Foundation of China (No.\ 12171362). \\

\end{document}